\begin{document}

\title[On~the~approximability by~root classes of~groups]{On~conditions for~the~approximability of~the~fundamental groups of~graphs of~groups by~root classes of~groups}

\author{E.~V.~Sokolov}
\address{Ivanovo State University, Russia}
\email{ev-sokolov@yandex.ru}

\begin{abstract}
Suppose that $\Gamma$ is a~non-empty connected graph, $\mathfrak{G}$ is the~fundamental group of~a~graph of~groups over~$\Gamma$, and~$\mathcal{C}$ is a~root class of~groups (the~last means that $\mathcal{C}$ contains non-trivial groups and~is closed under~taking subgroups, extensions, and~Cartesian powers of~a~certain type). It~is known that $\mathfrak{G}$ is residually a~$\mathcal{C}$\nobreakdash-group if it has a~homomorphism onto~a~group from~$\mathcal{C}$ acting injectively on~all vertex groups. We prove that, in~this assertion, the~words ``vertex groups'' can be replaced by~``edge subgroups'' provided all vertex groups are residually $\mathcal{C}$\nobreakdash-groups. We also show that the~converse doesn't need to~hold if $\mathcal{C}$ consists of~periodic groups and~contains at~least one infinite group.
\end{abstract}

\keywords{Residual finiteness, residual $p$\nobreakdash-finiteness, residual solvability, approximability by~root classes, generalized free product, HNN-extension, tree product, fundamental group of~a~graph of~groups}

\thanks{The~study was supported by~the~Russian Science Foundation grant No.~22-21-00166,\\ \url{https://rscf.ru/en/project/22-21-00166/}}

\maketitle

\newtheorem{theorem}{Theorem}
\newtheorem{proposition}{Proposition}[section]

\section{Introduction. Statement of~results}

This article continues the~paper~\cite{SokolovTumanova2020LJM}, where the~relationship is considered between two properties of~the~fundamental group of~a~graph of~groups. The~first of~these properties is the~approximability by~a~root class~$\mathcal{C}$, and~the~second is the~existence of~a~homomorphism of~the~fundamental group that maps it onto~a~group from~$\mathcal{C}$ and~acts injectively on~all edge subgroups.

Let us recall that a~group~$X$ is~said to~be \emph{approximable by~a~class of~groups~$\mathcal{C}$} (this term was introduced by~A.~I.~Malcev~\cite{Malcev1949MS}) or~\emph{residually a~$\mathcal{C}$\nobreakdash-group} (this term belongs to~P.~Hall~\cite{Gruenberg1957PLMS}) if, for~each $x \in X \setminus \{1\}$, there exists a~homomorphism~$\sigma$ of~$X$ onto~a~group from~$\mathcal{C}$ (a~\emph{$\mathcal{C}$\nobreakdash-group}) such that $x\sigma \ne 1$. The~property of~\emph{residual finiteness} (i.\,e.,~the~approximability by~the~class of~all finite groups) is most famous because a~finitely presented group with~this property has a~solvable word problem~\cite{Malcev1958PIvPI}. At~the~same time, the~approximability by~other classes of~groups is also considered in~the~literature, and~many of~these classes are root classes of~groups.

In~accordance with~one of~the~equivalent definitions (see Proposition~\ref{p1401} below), a~class of~groups~$\mathcal{C}$ is called a~\emph{root class} if it contains non-trivial groups and~is closed under~taking subgroups, extensions, and~Cartesian products of~the~form $\prod_{y \in Y} X_{y}$, where $X, Y \in \mathcal{C}$ and~$X_{y}$ is an~isomorphic copy of~$X$ for~each $y \in Y$. The~concept of~a~root class was introduced by~K.~Gruenberg~\cite{Gruenberg1957PLMS} and~turned~out to~be very useful in~studying the~approximability of~the~fundamental groups of~various graphs of~groups~\cite{AzarovTieudjo2002PIvSU, Goltsov2015MN, Tumanova2015IVM, Tumanova2019SMJ, SokolovTumanova2020IVM, Sokolov2021SMJ1, Sokolov2021SMJ2, Sokolov2021JA, Sokolov2022CA, Sokolov2023JGT}. Thanks to~its use, it~became possible, in~particular, to~make significant progress in~the~study of~the~residual $p$\nobreakdash-finiteness (where $p$ is a~prime number) and~the~residual solvability of~such groups.{\parfillskip=0pt{}\par}

Everywhere below, it is assumed that $\Gamma = (\mathcal{V}, \mathcal{E})$ is a~non-empty connected undirected graph with~a~vertex set~$\mathcal{V}$ and~an~edge set~$\mathcal{E}$ (loops and~multiple edges are allowed). We define a~directed \emph{graph of~groups}
$$
\mathcal{G}(\Gamma) = \big(\Gamma,\ G_{v}\ (v \in \mathcal{V}),\ H_{e},\ \varphi_{\pm e}\ (e \in \mathcal{E})\big)
$$
over~$\Gamma$ by~assigning to~each vertex $v \in \mathcal{V}$ some group~$G_{v}$, and~to~each edge $e \in \mathcal{E}$ a~direction, a~group~$H_{e}$, and~injective homomorphisms $\varphi_{+e}\colon H_{e} \to G_{e(1)}$, $\varphi_{-e}\colon H_{e} \to G_{e(-1)}$ (where $e(1)$ and~$e(-1)$ denote the~vertices that are the~ends of~$e$). Let us refer to~the~groups~$G_{v}$ and~the~subgroups $H_{+e} = H_{e}\varphi_{+e}$, $H_{-e} = H_{e}\varphi_{-e}$ as~\emph{vertex groups} and~\emph{edge subgroups}, respectively.

For~each maximal tree $\mathcal{T} = (\mathcal{V}, \mathcal{E}_{\mathcal{T}})$ in~$\Gamma$, we can consider a~group representation whose generators are the~generators of~the~groups~$G_{v}$ ($v \in \mathcal{V}$) and~symbols~$t_{e}$ ($e \in \mathcal{E} \setminus \mathcal{E}_{\mathcal{T}}$), and~whose defining relations are the~relations of~$G_{v}$ ($v \in \mathcal{V}$) and~all possible relations of~the~form
\begin{align*}
h\varphi_{+e}^{\vphantom{1}} &= h\varphi_{-e}^{\vphantom{1}}\ 
(e \in \mathcal{E}_{\mathcal{T}}^{\vphantom{1}},\ 
h \in H_{e}^{\vphantom{1}}),\\
t_{e}^{-1}(h\varphi_{+e}^{\vphantom{1}})t_{e}^{\vphantom{1}} 
&= h\varphi_{-e}^{\vphantom{1}}\ 
(e \in \mathcal{E} \setminus \mathcal{E}_{\mathcal{T}}^{\vphantom{1}},\ 
h \in H_{e}^{\vphantom{1}}),
\end{align*}
where $h\varphi_{\varepsilon e}$ ($\varepsilon = \pm 1$) is a~word in~the~generators of~$G_{e(\varepsilon)}$ that represents the~image of~$h$ under~$\varphi_{\varepsilon e}$. It~is known~\cite[\S~5.1]{Serre1980} that, for~all maximal trees in~$\Gamma$, the~corresponding representations of~the~described form determine, up~to~isomorphism, the~same group. This group is called the~\emph{fundamental group} of~the~graph of~groups~$\mathcal{G}(\Gamma)$ and~is usually denoted by~$\pi_{1}(\mathcal{G}(\Gamma))$. It~is also known~\cite[\S~5.2]{Serre1980} that the~identity mappings of~generators determine embeddings of~the~vertex groups~$G_{v}$ ($v \in \mathcal{V}$) into~$\pi_{1}(\mathcal{G}(\Gamma))$, and~therefore these groups can be considered subgroups of~$\pi_{1}(\mathcal{G}(\Gamma))$. Let us write $\pi_{1}(\mathcal{G}(\Gamma), \mathcal{T})$ to~specify the~maximal tree~$\mathcal{T}$ used to~construct a~representation of~$\pi_{1}(\mathcal{G}(\Gamma))$.

An~important role in~the~study of~the~approximability of~$\pi_{1}(\mathcal{G}(\Gamma))$ by~a~root class~$\mathcal{C}$ belongs to~homomorphisms that map this group onto~$\mathcal{C}$\nobreakdash-groups and~act injectively on~all its vertex groups (for~brevity, we call them below \emph{homomorphisms of~type~$(\mathfrak{i})$}). If~such a~homomorphism exists, then $\pi_{1}(\mathcal{G}(\Gamma))$ is residually a~$\mathcal{C}$\nobreakdash-group~\cite[Proposition~7]{Sokolov2021SMJ1}. But~at~the~same time, all vertex groups turn~out to~be $\mathcal{C}$\nobreakdash-groups, and~thus we are dealing with~a~very special case. In~the~general case, when the~vertex groups need~not belong to~$\mathcal{C}$, the~so-called ``filtration approach'' is most often used to~prove that $\pi_{1}(\mathcal{G}(\Gamma))$ is residually a~$\mathcal{C}$\nobreakdash-group. This method was proposed by~G.~Baumslag~\cite{BaumslagG1963TAMS} and~consists in~the~approximability of~$\pi_{1}(\mathcal{G}(\Gamma))$ by~the~fundamental groups of~graphs of~groups having homomorphisms of~type~$(\mathfrak{i})$. For~all its productivity, this approach also has certain limitations, a~detailed discussion of~which is given in~\cite{Sokolov2021SMJ1}. Therefore, the~problem arises of~finding conditions of~a~general form which make it possible to~establish the~approximability of~$\pi_{1}(\mathcal{G}(\Gamma))$ without using the~filtration method. In~the~present article, the~following theorem is proved in~this direction.

\begin{theorem}\label{t2201}
Suppose that $\mathcal{C}$ is a~root class of~groups and~all vertex groups~$G_{v}$ \textup{(}$v \in \mathcal{V}$\textup{)} are residually $\mathcal{C}$\nobreakdash-groups. If~there exists a~homomorphism~$\sigma$ of~$\pi_{1}(\mathcal{G}(\Gamma))$ onto~a~group from~$\mathcal{C}$ acting injectively on~all edge subgroups~$H_{\varepsilon e}$~\textup{(}$e \in \mathcal{E}$\textup{,} $\varepsilon = \pm 1$\textup{),} then $\pi_{1}(\mathcal{G}(\Gamma))$ is residually a~$\mathcal{C}$\nobreakdash-group.
\end{theorem}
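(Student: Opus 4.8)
The plan is to work through the kernel of the given homomorphism. Put $G=\pi_1(\mathcal{G}(\Gamma))$ and $N=\ker\sigma$; since $\sigma$ maps $G$ onto a group from $\mathcal{C}$, we have $G/N\in\mathcal{C}$. I would deduce the theorem from two facts: (a) $N$ is residually a $\mathcal{C}$-group, and (b) an extension of a group that is residually a $\mathcal{C}$-group by a $\mathcal{C}$-group is again residually a $\mathcal{C}$-group. Granting these, $G$ is residually a $\mathcal{C}$-group at once, being such an extension. Fact (b) is a routine consequence of the root-class axioms and is likely already available among the known properties of root classes; if a direct argument is wanted, take $1\ne g\in G$: if $g\notin N$, then $g\sigma\ne1$, and if $g\in N$, pick a homomorphism $\alpha\colon N\to P\in\mathcal{C}$ with $g\alpha\ne1$, put $K=\ker\alpha$ and $\overline{K}=\bigcap_{x\in G}x^{-1}Kx$; since $K\trianglelefteq N$, one has $\overline{K}\trianglelefteq G$ and the distinct conjugates of $K$ in $G$ are indexed by a set of cardinality at most $[G:N]$, so $N/\overline{K}$ embeds in a Cartesian power of $N/K\in\mathcal{C}$ indexed by a subset of $G/N\in\mathcal{C}$ and therefore lies in $\mathcal{C}$; then $G/\overline{K}$ is an extension of $N/\overline{K}\in\mathcal{C}$ by $G/N\in\mathcal{C}$, hence belongs to $\mathcal{C}$, and $g$ has non-trivial image in it.

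The core of the argument is fact (a), for which I would use Bass--Serre theory~\cite{Serre1980}. The group $G$ acts on the associated tree $T$ so that the vertex stabilizers are the conjugates of the vertex groups $G_{v}$ and the edge stabilizers are the conjugates of the edge subgroups $H_{\varepsilon e}$. Because $\sigma$ is injective on every $H_{\varepsilon e}$, we have $N\cap H_{\varepsilon e}=1$, and since $N$ is normal in $G$ this forces $N$ to meet every conjugate of every edge subgroup trivially; thus $N$ acts on $T$ with trivial edge stabilizers. Consequently $N$ is the fundamental group of a graph of groups over $N\backslash T$ all of whose edge groups are trivial, i.e., $N$ is a free product of a free group and of groups of the form $N\cap x^{-1}G_{v}x$ ($x\in G$, $v\in\mathcal{V}$). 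Each such factor is isomorphic to a subgroup of the corresponding $G_{v}$, hence is residually a $\mathcal{C}$-group since this property is inherited by subgroups; and a free group is residually a $\mathcal{C}$-group for every root class $\mathcal{C}$. As a free product of groups that are residually $\mathcal{C}$-groups is again residually a $\mathcal{C}$-group~\cite{Gruenberg1957PLMS}, it follows that $N$ is residually a $\mathcal{C}$-group, establishing (a).

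I expect the main obstacle to lie in assembling (a) correctly rather than in any isolated deep step. One must check that $N$ really does act with trivial edge stabilizers — this is precisely where the hypothesis that $\sigma$ be injective on the edge subgroups enters, together with the normality of $N$ — then read off the free-product decomposition of $N$ from the structure theorem for groups acting on trees, and finally note that each free factor $N\cap x^{-1}G_{v}x$ is a subgroup of a vertex group, which is the only point where the hypothesis that the vertex groups are residually $\mathcal{C}$-groups is used. The remaining ingredients — that being residually a $\mathcal{C}$-group is inherited by subgroups and preserved under free products, that free groups are residually $\mathcal{C}$-groups, and that $\mathcal{C}$ and the class of residually $\mathcal{C}$-groups have the relevant closure under extensions — are standard properties of root classes. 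Notably, this route never passes through the fundamental groups of auxiliary graphs of groups, so it avoids the filtration method altogether.
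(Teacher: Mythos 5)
Your proof is correct, but it takes a genuinely different route from the paper's. You obtain the key structural fact --- that $N=\ker\sigma$ is the free product of a free group and of subgroups of conjugates of the vertex groups --- in one stroke, by letting $N$ act on the Bass--Serre tree of $\pi_{1}(\mathcal{G}(\Gamma))$ and invoking the general structure theorem for groups acting on trees: injectivity of $\sigma$ on the $H_{\varepsilon e}$ together with the normality of $N$ kills all edge stabilizers, and each vertex stabilizer $N\cap x^{-1}G_{v}x$ is a conjugate of $N\cap G_{v}$, hence a subgroup of a residually $\mathcal{C}$-group. Combined with Proposition~\ref{p1405} (your hands-on argument via $\overline{K}=\bigcap_{x}x^{-1}Kx$ is essentially a proof of the Gruenberg condition of Proposition~\ref{p1401} and could simply be replaced by a citation of Proposition~\ref{p1405}.3), this finishes the theorem; in fact your decomposition of $N$ yields Proposition~\ref{p2203} directly, with $L=N$. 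The paper deliberately avoids the full structure theorem for actions on trees and instead manufactures the same decomposition from two more elementary, precisely citable ingredients: Cohen's subgroup theorem for HNN-extensions (Proposition~\ref{p1106}) and the classical Kurosh theorem (Proposition~\ref{p1107}), glued together by the embedding of a tree product into an HNN-extension over the free product of its vertex groups (Proposition~\ref{p2202}); this forces a two-stage argument (trees first, then arbitrary graphs via the HNN-extension over the tree product) and an extra appeal to the Gruenberg condition to descend from a normal subgroup of $N$ to one of $\pi_{1}(\mathcal{G}(\Gamma))$. What your approach buys is brevity and conceptual transparency; what the paper's buys is self-containedness relative to its stated toolkit and two intermediate statements (Propositions~\ref{p2202} and~\ref{p2203}) advertised as being of independent interest. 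The only point in your write-up needing a careful reference is the structure theorem for a group acting on a tree with a possibly infinite quotient graph (Serre, \emph{Trees}, I.5.4), since the quotient $N\backslash T$ need not be finite here.
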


The~above theorem generalizes Theorem~1 from~\cite{SokolovTumanova2020LJM}, which says that the~same assertion holds if $\Gamma$ is a~finite graph. It~becomes possible to~discard this finiteness condition due~to~a~completely different proof. The~latter is based on~two facts on~the~structure of~subgroups of~$\pi_{1}(\mathcal{G}(\Gamma))$, which are of~independent interest (see Propositions~\ref{p2202} and~\ref{p2203} below).

The~map~$\sigma$ appearing in~Theorem~\ref{t2201} is referred to~below as~a~\emph{homomorphism of~type~$(\mathfrak{ii})$}. Let us note that, if a~more complex free construction is composed of~several simpler ones and~we want to~prove the~existence of~a~homomorphism of~type~$(\mathfrak{i})$ of~the~complex construction, then it may not be enough for~the~simple constructions to~have homomorphisms of~the~same type. However, if we replace $(\mathfrak{i})$ with~$(\mathfrak{ii})$, the~situation improves: this can be seen, for~example, from~the~proof of~Theorem~2 in~\cite{SokolovTumanova2023SMJ}. Thus, the~search for~conditions for~the~existence of~homomorphisms of~type~$(\mathfrak{ii})$ is an~independent problem, \pagebreak the~results of~which can be used both~for~direct proof of~approximability and~for the~subsequent application of~the~filtration approach.

Now let us turn to~the~question of~whether the~converse of~Theorem~\ref{t2201} holds. The~following assertion is proved in~\cite{SokolovTumanova2020LJM}.

\begin{theorem}\label{t2502}
If $\mathcal{C}$ is a~root class of~groups containing at~least one infinite group and~not containing some \textup{(}absolutely\textup{)} free group of~finite or~countable rank\textup{,} then for~any graph $\Gamma = (\mathcal{V}, \mathcal{E})$\textup{,} there exists a~graph of~groups
$$
\mathcal{G}(\Gamma) = \big(\Gamma,\ G_{v}\ (v \in \mathcal{V}),\ H_{e},\ \varphi_{\pm e}\ (e \in \mathcal{E})\big)
$$
such that\textup{:}

\textup{1)}\hspace{1ex}all groups~$G_{v}$ \textup{(}$v \in \mathcal{V}$\textup{)} are residually $\mathcal{C}$\nobreakdash-groups\textup{;}

\textup{2)}\hspace{1ex}all subgroups~$H_{\varepsilon e}$ \textup{(}$e \in \mathcal{E}$\textup{,} $\varepsilon = \pm 1$\textup{)} belong to~$\mathcal{C}$ and~are non-trivial\textup{;}

\textup{3)}\hspace{1ex}$\pi_{1}(\mathcal{G}(\Gamma))$ is residually a~$\mathcal{C}$\nobreakdash-group\textup{;}

\textup{4)}\hspace{1ex}if $\mathcal{T}$ is a~maximal tree in~$\Gamma$ and~$\sigma$ is a~homomorphism of~$\pi_{1}(\mathcal{G}(\Gamma), \mathcal{T})$ onto~a~group from~$\mathcal{C}$\textup{,} then for~any $e \in \mathcal{E}$\textup{,} $\varepsilon = \pm 1$\textup{,} the~relation $\operatorname{ker}\sigma \cap H_{\varepsilon e} \ne 1$ holds.
\end{theorem}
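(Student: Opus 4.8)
The plan is to construct, for any prescribed graph $\Gamma$, a graph of groups in which every edge group is a fixed non-trivial group $A \in \mathcal{C}$ chosen so that any $\mathcal{C}$-image of the ambient group is forced to be non-injective on each edge subgroup, yet the whole fundamental group is still residually a $\mathcal{C}$-group. The natural candidate for $A$ is an infinite group from $\mathcal{C}$; the key point is that although $\mathcal{C}$ contains an infinite group, it does not contain a free group of finite or countable rank, so a free product like $A * A$ (or, more precisely, a suitable free construction whose edge subgroups are copies of $A$) will have ``too large'' a free-like quotient to sit inside $\mathcal{C}$. I would let all vertex groups be free products of copies of $A$ amalgamating nothing, or rather choose each $G_v$ to be a free product of copies of $A$ indexed so that every incident edge gets its own free factor, and each edge map $\varphi_{\pm e}$ an isomorphism onto such a free factor.

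First I would fix $A \in \mathcal{C}$ infinite and recall/verify that a free product of (at least two) copies of $A$, while residually $\mathcal{C}$ by the root-class free-product theorems already available in the literature, admits no homomorphism onto a $\mathcal{C}$-group that is injective on two distinct conjugates-free copies of $A$ — the obstruction being that such a homomorphism would realize inside a $\mathcal{C}$-group a subgroup generated by two isomorphic copies of $A$ meeting trivially in ``free'' position, and iterating/using the closure of $\mathcal{C}$ under subgroups one would extract a non-abelian free subgroup (or a free group of countable rank), contradicting the hypothesis on $\mathcal{C}$. Second, I would assemble the graph of groups: to each vertex $v$ attach $G_v = \bigast_{e \ni v} A_{e,v}$, a free product of copies $A_{e,v} \cong A$ over the edges incident to $v$ (with two factors for a loop), and define $H_e = A$, $\varphi_{+e}$ the identification $A \xrightarrow{\sim} A_{e,e(1)}$, $\varphi_{-e}$ the identification $A \xrightarrow{\sim} A_{e,e(-1)}$; this makes (2) immediate and (1) a standard fact about free products of $\mathcal{C}$-groups being residually $\mathcal{C}$.

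Third, for (3) I would show $\pi_1(\mathcal{G}(\Gamma))$ is residually a $\mathcal{C}$-group. Here I expect to invoke the filtration approach or a direct root-class analogue of the Baumslag construction: since every vertex group is a free product of copies of $A$ with all edge subgroups equal to free factors, the whole fundamental group is itself isomorphic to a free product of copies of $A$ with a free group $F$ (of rank $|\mathcal{E}| - |\mathcal{E}_{\mathcal{T}}| + (\text{something})$, corresponding to the stable letters and the tree structure) — more carefully, collapsing each amalgamation identifies the various $A$-factors along edges, and the resulting group is a free product of $|\mathcal{V}|$-many copies of $A$ modulo identifications that, tree by tree, just glue factors, leaving a single free product $A * A * \cdots$ together with free generators $t_e$. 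Such a group is residually $\mathcal{C}$ by the known closure results for root classes under free products and free-by-$\mathcal{C}$-type extensions (cf.\ the cited papers), provided one checks the HNN/amalgam edge conditions, which are trivially satisfied because each edge subgroup is a retract. Fourth, for (4), given any $\sigma\colon \pi_1(\mathcal{G}(\Gamma), \mathcal{T}) \to Q \in \mathcal{C}$, I would restrict to a subgroup containing two of the glued copies of $A$ that sit in free-product position (which exists because $\Gamma$ is non-empty so there is at least one edge, hence at least two $A$-factors identified across it only partially — the two ends give genuinely distinct free factors inside the relevant vertex group or across the edge); then $\sigma$ restricted there lands in $Q$ and, were it injective on both copies of $A$, it would embed $A * A$ into $Q$, contradicting step one. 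Hence $\ker\sigma \cap H_{\varepsilon e} \neq 1$ for that edge; a symmetry/homogeneity argument (every edge looks the same) gives it for all $e$, $\varepsilon$.

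The main obstacle I anticipate is step one — proving cleanly that an infinite $A \in \mathcal{C}$ cannot have two copies embedded in ``free position'' inside any $\mathcal{C}$-group — because one must convert the hypothesis ``$\mathcal{C}$ omits a free group of finite or countable rank'' into this statement; the bridge is presumably a lemma (likely already in the literature or provable via the Nielsen–Schreier / Kurosh subgroup theorem applied to $A * A$) saying that $A * A$ with $A$ infinite contains a free subgroup of countable rank, so any faithful-on-both-factors $\mathcal{C}$-image would contain, by subgroup-closure of $\mathcal{C}$, that countable-rank free group. A secondary technical point is getting the precise isomorphism type of $\pi_1(\mathcal{G}(\Gamma))$ for (3) right when $\Gamma$ has loops or multiple edges, but this is bookkeeping with Bass–Serre theory, not a genuine difficulty.
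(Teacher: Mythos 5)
There is a genuine gap, and it is fatal to the construction rather than a fixable detail. Note first that the paper does not reprove Theorem~\ref{t2502}; it quotes it from \cite{SokolovTumanova2020LJM} and explicitly records the mechanism of that proof: the vertex groups there are chosen so that they themselves admit \emph{no} homomorphism onto a $\mathcal{C}$\nobreakdash-group injective on their edge subgroups. Your construction does the exact opposite: by making every edge subgroup a free factor $A_{e,v}\cong A$ of its vertex group, you make each $H_{\varepsilon e}$ a retract of $G_v$ and in fact of all of $\pi_{1}(\mathcal{G}(\Gamma),\mathcal{T})$. The map sending $A_{e}$ identically to $A$, every other factor $A_{f}$ to $1$, and every stable letter to $1$ is a well-defined homomorphism of $\pi_{1}(\mathcal{G}(\Gamma),\mathcal{T})$ onto the $\mathcal{C}$\nobreakdash-group $A$ whose kernel meets $H_{\pm e}$ trivially, so condition 4) fails. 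The degenerate case makes this vivid: if $\Gamma$ is a single non-loop edge, your $\pi_{1}$ is just $A$ with $H_{\pm e}=A$, and the identity map already violates 4).

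The logical step you lean on in part four is also false as stated: a homomorphism that is injective on each of two free factors need not embed their free product (the fold map $A*A\to A$ is injective on both factors). More to the point, condition 4) asks for non-injectivity on a \emph{single} copy of $A$, not on a pair in free position, so even a correct version of your ``$A*A\notin\mathcal{C}$'' lemma (which is true, since $A*A$ contains $F_{2}$ and hence a free group of countable rank, and subgroup closure then excludes it from $\mathcal{C}$) cannot yield 4) for a construction in which each individual edge subgroup lies in $\mathcal{C}$ and is complemented by a retraction. Parts 1)--3) of your argument are fine for your graph of groups (Proposition~\ref{p1405} gives the residual $\mathcal{C}$-ness of free products of $\mathcal{C}$-groups and free groups), but the theorem stands or falls on 4), and for that you need vertex groups --- e.g.\ residually $\mathcal{C}$ groups that are not themselves ``too close'' to $\mathcal{C}$, with edge subgroups sitting in them non-retractively --- for which every homomorphism onto a $\mathcal{C}$-group is already forced to crush the edge subgroup.
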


The~above theorem says that, in~very many cases, the~approximability of~$\pi_{1}(\mathcal{G}(\Gamma))$ by~a~root class~$\mathcal{C}$ does~not imply that this group has a~homomorphism of~type~$(\mathfrak{ii})$. However, its proof given in~\cite{SokolovTumanova2020LJM} makes essential use of~the~fact that the~vertex groups of~the~constructed graph of~groups do~not have homomorphisms onto~$\mathcal{C}$\nobreakdash-groups that are injective on~their edge subgroups. Thus, the~absence of~a~homomorphism of~type~$(\mathfrak{ii})$ is~explained not~so~much by~the~structure of~the~group~$\pi_{1}(\mathcal{G}(\Gamma))$ as~a~whole, but~by~the~properties of~its vertex groups. In~the~present article, we consider the~case when all vertex groups belong to~$\mathcal{C}$ and,~therefore, cannot interfere with~the~existence of~a~homomorphism of~the~indicated type. The~following assertion is proved.

\begin{theorem}\label{t2601}
If $\mathcal{C}$ is a~root class of~groups consisting of~periodic groups and~containing at~least one infinite group\textup{,} then for~any graph $\Gamma = (\mathcal{V}, \mathcal{E})$ with~a~non-empty set of~edges\textup{,} there exists a~graph of~groups
$$
\mathcal{G}(\Gamma) = \big(\Gamma,\ G_{v}\ (v \in \mathcal{V}),\ H_{e},\ \varphi_{\pm e}\ (e \in \mathcal{E})\big)
$$
such that\textup{:}

\textup{1)}\hspace{1ex}all groups~$G_{v}$ \textup{(}$v \in \mathcal{V}$\textup{)} belong to~the~class~$\mathcal{C}$\textup{;}

\textup{2)}\hspace{1ex}all subgroups~$H_{\varepsilon e}$ \textup{(}$e \in \mathcal{E}$\textup{,} $\varepsilon = \pm 1$\textup{)} are non-trivial\textup{;}

\textup{3)}\hspace{1ex}$\pi_{1}(\mathcal{G}(\Gamma))$ is residually a~$\mathcal{C}$\nobreakdash-group\textup{;}

\textup{4)}\hspace{1ex}if $\mathcal{T}$ is a~maximal tree in~$\Gamma$ and~$\sigma$ is a~homomorphism of~$\pi_{1}(\mathcal{G}(\Gamma), \mathcal{T})$ onto~a~group from~$\mathcal{C}$\textup{,} then for~any $e \in \mathcal{E}$\textup{,} $\varepsilon = \pm 1$\textup{,} the~relation $\operatorname{ker}\sigma \cap H_{\varepsilon e} \ne 1$ holds.
\end{theorem}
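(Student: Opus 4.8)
The plan is to build the desired graph of groups by a single uniform construction, exploiting the fact that a root class $\mathcal{C}$ of periodic groups containing an infinite group must contain, for some prime $p$, a copy of the locally cyclic group $C_{p^{\infty}}$ (the Pr\"ufer $p$-group). Indeed, an infinite periodic $\mathcal{C}$-group $X$ contains a countably infinite locally finite subgroup, hence (by closure under subgroups) $\mathcal{C}$ contains arbitrarily large finite cyclic $p$-groups $C_{p^{n}}$ for infinitely many values of $n$ with fixed $p$; taking the Cartesian product $\prod_{n} C_{p^{n}}$ of the allowed form and then the subgroup generated by a compatible system of elements, one obtains $C_{p^{\infty}} \in \mathcal{C}$. (If a single prime does not suffice, one passes to a suitable locally cyclic locally finite group built the same way; I expect the Pr\"ufer group to be the key example.) The crucial feature of $A := C_{p^{\infty}}$ is that it is infinite, belongs to $\mathcal{C}$, and \emph{every} homomorphism from $A$ onto a periodic $\mathcal{C}$-group has non-trivial kernel when that image is required to be ``small'' relative to $A$ — more precisely, $A$ has the property that it has no proper subgroups of finite index and every proper quotient is again a $p$-group of the same shape; this is what will force condition~4).

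Next I would fix the graph $\Gamma$ and a maximal tree $\mathcal{T}$, assign $G_{v} = A$ to every vertex $v$, and for every edge $e$ choose the edge group $H_{e}$ together with the two embeddings $\varphi_{\pm e}\colon H_{e}\to A$ so that each image $H_{\varepsilon e}$ is a non-trivial finite cyclic subgroup of $A = C_{p^{\infty}}$ — say $H_{e} \cong C_{p}$, embedded as the unique subgroup of order $p$ in each incident copy of $A$. Since the amalgamated/associated subgroups are then the unique subgroup of order $p$, all edge maps are canonically identified, and $\pi_{1}(\mathcal{G}(\Gamma),\mathcal{T})$ is a tree product of Pr\"ufer groups (with HNN-stable letters for the non-tree edges) amalgamating the bottom $C_{p}$. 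To get condition~3), that this fundamental group is residually a $\mathcal{C}$-group, I would invoke Theorem~\ref{t2201}: the vertex groups lie in $\mathcal{C}$ (so are trivially residually $\mathcal{C}$), and it suffices to exhibit one homomorphism of type~$(\mathfrak{ii})$, i.e. onto a $\mathcal{C}$-group injective on all edge subgroups. Mapping every vertex copy of $A$ identically onto a single fixed copy of $A\in\mathcal{C}$ and sending all stable letters $t_{e}$ to $1$ gives a homomorphism onto $A$; it is injective on each $H_{\varepsilon e}$ because these are the order-$p$ subgroup and the map is the identity on each vertex group. Hence Theorem~\ref{t2201} applies and yields residual $\mathcal{C}$-ness.

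The main obstacle is condition~4): showing that \emph{every} homomorphism $\sigma\colon \pi_{1}(\mathcal{G}(\Gamma),\mathcal{T})\twoheadrightarrow Q$ with $Q\in\mathcal{C}$ has $\ker\sigma\cap H_{\varepsilon e}\neq 1$ for all $e,\varepsilon$. The point is that $H_{\varepsilon e}$ has order $p$, so $\ker\sigma\cap H_{\varepsilon e}=1$ would mean $\sigma$ is injective on that $C_{p}$; I must rule this out. The key is that $H_{\varepsilon e}$ sits inside the vertex group $G_{e(\varepsilon)}=A=C_{p^{\infty}}$ as its unique subgroup of order $p$, and $A$ is a divisible abelian group: in $C_{p^{\infty}}$ every element is a $p$-th power, so the generator $a$ of the order-$p$ subgroup equals $b^{p}$ for some $b\in A\subseteq \pi_{1}(\mathcal{G}(\Gamma),\mathcal{T})$. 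Then $a\sigma=(b\sigma)^{p}$, and since $Q\in\mathcal{C}$ is periodic, by iterating divisibility $a$ lies in $\bigcap_{n}Q^{p^{n}}$-type intersections inside the image of $A$; because the image of $A$ in $Q$ is a finitely... no — rather, because $A\sigma$ is a periodic quotient of a divisible abelian $p$-group, $A\sigma$ is itself divisible abelian $p$, so either $A\sigma=1$ (then $a\sigma=1$, done) or $A\sigma\cong C_{p^{\infty}}$, in which case $A\sigma$ is an infinite subgroup of the \emph{periodic} group $Q$; this is allowed, but then I use that $a$, being a $p^{n}$-th power in $A$ for all $n$, maps to an element that is a $p^{n}$-th power in $Q$ for all $n$, and I must still extract $a\sigma=1$. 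The cleanest route: observe that $C_{p^{\infty}}$ has no nontrivial finite quotient and no proper subgroup of finite index, so any homomorphism $C_{p^{\infty}}\to Q$ with $Q$ periodic factors through a divisible image; if I additionally arrange (by choosing $Q$ minimal, which I may since $\sigma$ is onto an \emph{arbitrary} $\mathcal{C}$-group and condition~4) must hold for all of them) — hmm, I cannot choose $Q$. So instead I argue directly: the restriction $\sigma|_{A}\colon C_{p^{\infty}}\to Q$ has image a divisible subgroup of the periodic group $Q$; its kernel, being a proper-or-full subgroup of $C_{p^{\infty}}$, is either all of $A$ (then $a\sigma=1$) or a finite cyclic $C_{p^{k}}$; in the latter case $a\in C_{p^{k}}=\ker(\sigma|_{A})$ as soon as $k\geq 1$, again giving $a\sigma=1$; the only remaining case is $\ker(\sigma|_{A})=1$, i.e. $\sigma|_{A}$ injective, so $Q$ contains a copy of $C_{p^{\infty}}$ — which is consistent — and in that subcase $a\sigma\neq1$, so I must show this subcase cannot occur for a homomorphism defined on the whole fundamental group. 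This is where the free-construction structure enters: in a tree product (resp. HNN-extension) amalgamating $C_{p}$, any homomorphism to a $\mathcal{C}$-group that is injective on a vertex Pr\"ufer group collapses enough of the construction; I would derive a contradiction from the interaction of $\sigma$ on two distinct vertex groups meeting along $C_{p}$, using that $Q$ is periodic and that a periodic group cannot contain the amalgam $C_{p^{\infty}}\ast_{C_{p}}C_{p^{\infty}}$ (which contains non-abelian free subgroups, hence elements of infinite order) as long as $\Gamma$ has at least one edge. Thus forcing $\ker\sigma\cap H_{\varepsilon e}\neq1$. I expect this periodicity-versus-infinite-order argument to be the heart of the proof, with everything else — the construction of $A$, the verification of conditions 1)–3), and the bookkeeping over the tree $\mathcal{T}$ and non-tree edges — being routine given Theorem~\ref{t2201}.
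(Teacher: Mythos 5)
Your construction cannot satisfy condition~1), because $C_{p^{\infty}}\notin\mathcal{C}$ for \emph{any} root class $\mathcal{C}$ of periodic groups containing an infinite group. Indeed, let $Y\in\mathcal{C}$ be infinite and let $X\in\mathcal{C}$ have elements of unbounded order; then the Cartesian power $\prod_{y\in Y}X_{y}$, which must lie in $\mathcal{C}$ by the root-class axioms, contains an element of infinite order (take coordinates of strictly increasing orders over a countable subset of $Y$), contradicting periodicity. Hence every group in $\mathcal{C}$ has finite exponent, and the Pr\"ufer group, whose element orders are unbounded, is excluded. Your attempted derivation of $C_{p^{\infty}}\in\mathcal{C}$ is also wrong on its own terms: any subgroup of $\prod_{n}C_{p^{n}}$ is residually finite, whereas $C_{p^{\infty}}$ has no proper subgroup of finite index, so no ``compatible system'' inside that product generates a Pr\"ufer group. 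This is precisely why the paper's building blocks $H_{\infty}$, $A_{\infty}$, $B_{\infty}$ are arranged to have exponent dividing $p^{2}$: an infinite group of bounded exponent is the only kind available in $\mathcal{C}$.

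Even granting the vertex groups, your plan is structurally self-defeating: you verify condition~3) by exhibiting a homomorphism of type $(\mathfrak{ii})$ (the retraction onto one vertex group, injective on the order-$p$ edge subgroups, with image in $\mathcal{C}$), but condition~4) asserts exactly that no such homomorphism exists. Any graph of groups for which Theorem~\ref{t2201} applies in this direct way automatically fails 4), so the same map that proves your 3) refutes your 4). The residual sketch for 4) also does not close: a quotient of $C_{p^{\infty}}\ast_{C_{p}}C_{p^{\infty}}$ can perfectly well be periodic while remaining injective on the amalgamated $C_{p}$ (fold both factors onto one copy of $C_{p^{\infty}}$), so the ``periodic groups contain no free subgroups'' argument proves nothing about homomorphic images. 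The paper avoids both traps: it proves 3) directly, by passing to the quotient by the common normal edge subgroup $H_{\infty}$ for elements outside $H_{\infty}$ and by mapping onto the residually $p$-finite amalgam $P_{\infty}$ (certified via Higman's criterion through Propositions~\ref{p2602}--\ref{p2604}) for elements inside it; and it proves 4) by building into the graph of groups an element $\gamma$ whose conjugation induces the infinite-order automorphism $\alpha_{\infty}^{-1}\beta_{\infty}$ of $H_{\infty}$, so that in any periodic image $(\gamma\sigma)^{k}=1$ forces a non-trivial commutator $[h,\gamma^{k}]$ into $\operatorname{ker}\sigma\cap H_{\varepsilon e}$. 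You would need both of these ideas — bounded-exponent infinite vertex/edge groups and an infinite-order conjugation action on a common normal edge subgroup — to repair the proposal.
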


It remains an~open question: whether an~analog of~Theorem~\ref{t2601} holds if $\mathcal{C}$ consists of~finite groups or~contains non-periodic groups.

\section{On~some free constructions of~groups\\ and~subgroups of~these constructions}

Let us recall that if $\mathcal{E} = \{e\}$ and~$e(1) \ne e(-1)$, then the~fundamental group~$\pi_{1}(\mathcal{G}(\Gamma))$ is~said to~be the~\emph{generalized free product} of~the~groups~$G_{e(1)}$ and~$G_{e(-1)}$ with~the~\emph{amalgamated subgroup} $H_{+e} = H_{-e}$. If~$\mathcal{V} = \{v\}$ and~$\mathcal{E} \ne \varnothing$, then $\pi_{1}(\mathcal{G}(\Gamma))$ is the~\emph{HNN-extension} of~the~group~$G_{v}$ with~the~family of~\emph{stable letters} $\{t_{e} \mid e \in \mathcal{E}\}$ and~$G_{v}$ is the~\emph{base group} of~this HNN-extension. Finally, if $\Gamma$ is a~tree, then $\pi_{1}(\mathcal{G}(\Gamma))$ is called the~\emph{tree product} of~the~groups~$G_{v}$ ($v \in \mathcal{V}$). Below, we also use the~construction of~the~(ordinary) \emph{free product} of~a~family of~groups. Its definition and~properties can be found, for~example, in~\cite[\S~6.2]{Robinson1996}.
\pagebreak

The~next proposition follows from~the~results of~\cite{Cohen1974JAMS}.

\begin{proposition}\label{p1106}
If $\mathcal{V} = \{v\}$\textup{,} $\mathcal{E} \ne \varnothing$\textup{,} and~$N$ is a~normal subgroup of~$\pi_{1}(\mathcal{G}(\Gamma))$ that trivially intersects all edge subgroups~$H_{\varepsilon e}$ \textup{(}$e \in \mathcal{E}$\textup{,} $\varepsilon  = \pm 1$\textup{),} then $N$ splits as~the~\textup{(}ordinary\textup{)} free product of~a~free group and~groups\textup{,} each of~which is isomorphic to~the~subgroup $N \cap G_{v}$.
\end{proposition}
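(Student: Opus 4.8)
The plan is to read off the decomposition of~$N$ from~Bass--Serre theory (this is what underlies the results of~\cite{Cohen1974JAMS}). Since $\mathcal{V} = \{v\}$, the group $\mathfrak{G} = \pi_{1}(\mathcal{G}(\Gamma))$ is the HNN-extension of~$G_{v}$ with stable letters~$t_{e}$ ($e \in \mathcal{E}$), where $t_{e}$ conjugates $H_{+e}$ onto~$H_{-e}$, and~$\mathfrak{G}$ acts without inversions on~the associated Bass--Serre tree~$T$. The vertex set of~$T$ is the coset space $\mathfrak{G}/G_{v}$, so every vertex stabilizer is a~conjugate $g G_{v} g^{-1}$ ($g \in \mathfrak{G}$); the edges of~$T$ lying over a~given $e \in \mathcal{E}$ form the coset space $\mathfrak{G}/H_{+e}$, so every edge stabilizer is a~conjugate of~some~$H_{+e}$ (equivalently of~$H_{-e}$).

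First I would restrict this action to~$N$ and check that $N$ acts on~$T$ with trivial edge stabilizers. Indeed, an~edge stabilizer of the $N$-action has the form $N \cap g H_{\varepsilon e} g^{-1}$ for suitable $g \in \mathfrak{G}$, $e \in \mathcal{E}$, $\varepsilon = \pm 1$; since $N \trianglelefteq \mathfrak{G}$ we have $N \cap g H_{\varepsilon e} g^{-1} = g\big(g^{-1}Ng \cap H_{\varepsilon e}\big)g^{-1} = g\big(N \cap H_{\varepsilon e}\big)g^{-1} = 1$ by~hypothesis.

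Next I would invoke the structure theorem for a~group acting (without inversions) on~a~tree with trivial edge stabilizers: by Bass--Serre theory such a~group is the fundamental group of a~graph of groups over the quotient graph, with trivial edge groups; choosing a~maximal subtree of the quotient graph and noting that the relations attached to trivial edge groups are vacuous, this group is the free product of the vertex groups (one per vertex orbit) with a~free group, the latter being free of rank equal to the number of edges of $N \backslash T$ outside the chosen maximal subtree (possibly zero, possibly infinite). Applied to $N \curvearrowright T$ this gives $N \cong \big(\mathop{\text{\Large$\ast$}}_{i \in I}\, (N \cap g_{i} G_{v} g_{i}^{-1})\big) \ast F$ for some family $\{g_{i}\}_{i \in I} \subseteq \mathfrak{G}$ of representatives of the vertex orbits and some free group~$F$. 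Finally, using normality once more, for each~$i$ we have $N \cap g_{i} G_{v} g_{i}^{-1} = g_{i}\big(g_{i}^{-1}Ng_{i} \cap G_{v}\big)g_{i}^{-1} = g_{i}\big(N \cap G_{v}\big)g_{i}^{-1} \cong N \cap G_{v}$, so $N$ is the free product of~$F$ and a~family of copies of $N \cap G_{v}$, as claimed (if $N \cap G_{v} = 1$ this simply says $N$ is free, which is consistent with the statement).

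I do~not expect a~serious obstacle here: the whole argument reduces to the standard dictionary between a~subgroup's action on the Bass--Serre tree and a~graph-of-groups decomposition of that subgroup. The only points needing a~little care are the two applications of normality — to pass from each $H_{\varepsilon e}$ to all of its conjugates, and to identify $N \cap g_{i} G_{v} g_{i}^{-1}$ with $N \cap G_{v}$ — and the bookkeeping of the rank of~$F$ when $\mathcal{E}$, and hence~$T$, is infinite; Bass--Serre theory handles the infinite case with~no modification.
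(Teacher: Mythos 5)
Your argument is correct; the paper itself gives no proof of this proposition, merely noting that it follows from Cohen's subgroup theorem for HNN groups \cite{Cohen1974JAMS}, and your Bass--Serre derivation (trivial edge stabilizers for the $N$-action on the tree via normality, then the structure theorem, then normality again to identify each vertex stabilizer with $N \cap G_{v}$) is the standard modern proof of exactly that result. No gaps.
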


\begin{proposition}\label{p1107}
\textup{\cite{Kurosch1934MA}}
If $\mathbb{P}$ is a~free product of~groups~$A_{\lambda}$ \textup{(}$\lambda \in \Lambda$\textup{)} and~$N$ is a~subgroup of~$\mathbb{P}$\textup{,} then $N$ splits as~the~free product of~a~free group and~groups\textup{,} each of~which is isomorphic to~the~subgroup $x^{-1}Nx \cap A_{\lambda}$ for~some $x \in \mathbb{P}$\textup{,} $\lambda \in \Lambda$.
\end{proposition}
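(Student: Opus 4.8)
The plan is to derive this classical statement (the Kurosh subgroup theorem) from Bass--Serre theory, using exactly the machinery on which the rest of the paper rests. First I would realize $\mathbb{P}$ as the fundamental group of a graph of groups with \emph{trivial} edge subgroups: take the underlying graph $\Delta$ to be a star with a central vertex $c$ carrying the trivial group, a vertex $w_{\lambda}$ carrying $A_{\lambda}$ for each $\lambda \in \Lambda$, and a single edge $f_{\lambda}$ joining $c$ to $w_{\lambda}$ with $H_{f_{\lambda}} = 1$. Since $\Delta$ is a tree, the fundamental group of this graph of groups is precisely the free product $A_{\lambda}$ ($\lambda \in \Lambda$), i.e.\ $\mathbb{P}$. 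Let $T$ be the associated Bass--Serre tree. Its edge stabilizers are conjugates of the trivial edge groups, hence trivial, while its vertex stabilizers are the conjugates $gA_{\lambda}g^{-1}$ ($g \in \mathbb{P}$, $\lambda \in \Lambda$) together with conjugates of the trivial group.

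Next I would let $N$ act on $T$ by restriction of the $\mathbb{P}$\nobreakdash-action. Each $N$\nobreakdash-stabilizer of an edge is contained in the corresponding $\mathbb{P}$\nobreakdash-stabilizer, hence is trivial; the $N$\nobreakdash-stabilizer of a vertex lying over $w_{\lambda}$ equals $N \cap gA_{\lambda}g^{-1}$ for a suitable $g \in \mathbb{P}$, and the $N$\nobreakdash-stabilizer of a vertex lying over $c$ is trivial. By the structure theorem for groups acting on trees (Serre~\cite{Serre1980}, \S~5.4), $N$ is the fundamental group of the quotient graph of groups $N\backslash T$, all of whose edge groups are trivial and whose vertex groups are the groups $N \cap gA_{\lambda}g^{-1}$ and copies of the trivial group, one for each $N$\nobreakdash-orbit of vertices of $T$.

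Finally, fixing a maximal subtree of $N\backslash T$ and writing down the corresponding presentation of $\pi_{1}(N\backslash T)$, all defining relations attached to edges collapse because the edge groups are trivial; hence $\pi_{1}(N\backslash T)$ is the free product of its vertex groups with the free group on the edges outside the chosen subtree. Dropping the trivial vertex groups, $N$ becomes the free product of a free group and of the groups $N \cap gA_{\lambda}g^{-1}$. For each such factor the inner automorphism $y \mapsto g^{-1}yg$ of $\mathbb{P}$ restricts to an isomorphism $N \cap gA_{\lambda}g^{-1} \cong g^{-1}Ng \cap A_{\lambda}$, which is of the required form with $x = g$. This completes the argument.

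The only delicate points are the standard bookkeeping of the $\mathbb{P}$\nobreakdash-action on $T$ --- identifying vertex and edge stabilizers and noting that passing to $N$ only shrinks them --- and the observation that a graph of groups with trivial edge groups has fundamental group the free product of its vertex groups with a free group; neither is a genuine obstacle, since both belong to the basic theory of~\cite{Serre1980} already invoked above. (Alternatively, one can reproduce Kurosh's original argument via a system of double\nobreakdash-coset representatives together with a normal\nobreakdash-form and length computation; there the main work lies in checking that the resulting generating sets assemble into a free product, which is exactly what the tree action renders transparent.)
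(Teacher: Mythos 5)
Your argument is correct, but it is worth noting that the paper does not prove this statement at all: Proposition~\ref{p1107} is the classical Kurosh subgroup theorem, quoted with a citation to Kurosh's original 1934 paper, so there is no in-text proof to compare against. What you supply is the standard modern derivation via Bass--Serre theory, and it goes through: realizing $\mathbb{P}$ as the fundamental group of a star-shaped graph of groups with trivial edge groups, letting $N$ act on the Bass--Serre tree, and invoking the structure theorem to present $N$ as the fundamental group of the quotient graph of groups --- whose edge groups are trivial and whose vertex groups are the stabilizers $N \cap gA_{\lambda}g^{-1} \cong g^{-1}Ng \cap A_{\lambda}$ --- correctly yields the free-product decomposition with a free factor coming from the edges outside a maximal subtree. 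This is consistent in spirit with the paper, which already relies on Serre's \emph{Trees} for the basic theory of graphs of groups, whereas Kurosh's original route (which you mention in passing) is the combinatorial one via double-coset representatives and normal forms. The only points deserving explicit care are that the action on the Bass--Serre tree is without inversions (automatic for this construction) and that the quotient graph may be infinite, so the existence of a maximal subtree uses Zorn's lemma; neither affects the validity of your argument.
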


\begin{proposition}\label{p1201}
\textup{\cite[Proposition~1]{Sokolov2021SMJ1}}
Suppose that $\Delta$ is a~non-empty connected subgraph of~$\Gamma$ and~$\mathcal{G}(\Delta)$ is the~graph of~groups whose vertices and~edges are associated with~the~same groups\textup{,} directions\textup{,} and~homomorphisms as~in~$\mathcal{G}(\Gamma)$. If~$\mathcal{T}$ is a~maximal tree in~$\Gamma$ such that $\Delta \cap \mathcal{T}$ is a~maximal tree in~$\Delta$\textup{,} then the~identity mapping of~the~generators of~the~fundamental group $\pi_{1}(\mathcal{G}(\Delta), \Delta \cap \mathcal{T})$ into~$\pi_{1}(\mathcal{G}(\Gamma), \mathcal{T})$ defines an~injective homomorphism.
\end{proposition}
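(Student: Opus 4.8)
The plan is to observe that the statement reduces to an injectivity claim, to prove that claim first for finite graphs by peeling off one edge of $\Gamma$ outside $\Delta$ at a time, and then to recover the general case by a direct-limit argument. Since the edge set of $\Delta \cap \mathcal{T}$ is $\mathcal{E}_{\Delta} \cap \mathcal{E}_{\mathcal{T}}$, the generators of the presentation of $\pi_{1}(\mathcal{G}(\Delta), \Delta \cap \mathcal{T})$ (the generators of the groups $G_{v}$ with $v$ a vertex of $\Delta$ and the letters $t_{e}$ with $e \in \mathcal{E}_{\Delta} \setminus \mathcal{E}_{\Delta \cap \mathcal{T}}$) and its defining relators form subsets of those appearing in the presentation of $\pi_{1}(\mathcal{G}(\Gamma), \mathcal{T})$. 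Hence the identity map on generators does define a homomorphism $\iota \colon \pi_{1}(\mathcal{G}(\Delta), \Delta \cap \mathcal{T}) \to \pi_{1}(\mathcal{G}(\Gamma), \mathcal{T})$, and it remains to prove that $\iota$ is injective.

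For finite $\Gamma$ I would induct on $|\mathcal{E} \setminus \mathcal{E}_{\Delta}|$. If this number is $0$, connectedness of $\Gamma$ forces $\Delta = \Gamma$ and $\iota$ is the identity. For the inductive step there are two cases. If there is an edge $e \in \mathcal{E} \setminus (\mathcal{E}_{\mathcal{T}} \cup \mathcal{E}_{\Delta})$, put $\Gamma_{1} = \Gamma - e$; then $\mathcal{T}$ is still a maximal tree in $\Gamma_{1}$, the graph $\Delta$ lies in $\Gamma_{1}$ with $\Delta \cap \mathcal{T}$ still maximal in $\Delta$, and comparing presentations shows that $\pi_{1}(\mathcal{G}(\Gamma), \mathcal{T})$ is the HNN-extension of $\pi_{1}(\mathcal{G}(\Gamma_{1}), \mathcal{T})$ with stable letter $t_{e}$ conjugating $H_{+e}$ onto $H_{-e}$. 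As $H_{+e}$ and $H_{-e}$ lie in vertex groups, hence in $\pi_{1}(\mathcal{G}(\Gamma_{1}), \mathcal{T})$, the base group embeds into the HNN-extension by the normal form theorem, so $\pi_{1}(\mathcal{G}(\Gamma_{1}), \mathcal{T}) \hookrightarrow \pi_{1}(\mathcal{G}(\Gamma), \mathcal{T})$ via the identity on generators, and the inductive hypothesis for $\Delta \subseteq \Gamma_{1}$ finishes this case. Otherwise every edge outside $\Delta$ belongs to $\mathcal{T}$; in that case $\mathcal{V} \setminus \mathcal{V}_{\Delta}$ is non-empty (a spanning tree of $\Gamma$ contained in $\mathcal{T}$ equals $\mathcal{T}$, so $\mathcal{V}_{\Delta} = \mathcal{V}$ would force $\Delta = \Gamma$), and, rooting $\mathcal{T}$ at a vertex of $\Delta \cap \mathcal{T}$ and choosing $w \notin \mathcal{V}_{\Delta}$ at maximal distance from the root, one checks, using connectedness of $\Delta \cap \mathcal{T}$ and the present assumption, that $w$ has degree $1$ in $\mathcal{T}$ and no edges outside $\mathcal{T}$, so $w$ is a leaf of $\Gamma$ whose unique edge $e$ lies in $\mathcal{E}_{\mathcal{T}} \setminus \mathcal{E}_{\Delta}$. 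Setting $\Gamma_{1} = \Gamma - w$ and $\mathcal{T}_{1} = \mathcal{T} - w$, the presentations exhibit $\pi_{1}(\mathcal{G}(\Gamma), \mathcal{T})$ as the amalgamated free product $\pi_{1}(\mathcal{G}(\Gamma_{1}), \mathcal{T}_{1}) \ast_{H_{+e} = H_{-e}} G_{w}$, so $\pi_{1}(\mathcal{G}(\Gamma_{1}), \mathcal{T}_{1})$ embeds via the identity on generators, and since $\Delta \subseteq \Gamma_{1}$ with $\Delta \cap \mathcal{T}_{1} = \Delta \cap \mathcal{T}$, the inductive hypothesis again applies.

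To pass to the general case I would present $\pi_{1}(\mathcal{G}(\Gamma), \mathcal{T})$ as the direct limit of the groups $\pi_{1}(\mathcal{G}(\Gamma''), \mathcal{T} \cap \Gamma'')$ over all finite connected subgraphs $\Gamma''$ of $\Gamma$ for which $\mathcal{T} \cap \Gamma''$ is a maximal tree in $\Gamma''$. Such subgraphs are directed (given two of them, adjoin the vertices and $\mathcal{T}$-edges of the $\mathcal{T}$-paths joining the pieces) and are cofinal for both the generators and the defining relators of the presentation, and the connecting homomorphisms are the identity on generators, hence injective by the finite case just proved; consequently each $\pi_{1}(\mathcal{G}(\Gamma''), \mathcal{T} \cap \Gamma'')$ embeds into $\pi_{1}(\mathcal{G}(\Gamma), \mathcal{T})$. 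The same applies with $\Gamma$ replaced by $\Delta$. Now let $1 \ne g \in \pi_{1}(\mathcal{G}(\Delta), \Delta \cap \mathcal{T})$. Then $g$ lies, and is non-trivial, in $\pi_{1}(\mathcal{G}(\Delta'), \Delta' \cap \mathcal{T})$ for some finite subgraph $\Delta'$ of $\Delta$ of the above kind; but $\Delta'$ is also a finite subgraph of $\Gamma$ of that kind with $\mathcal{T} \cap \Delta' = \Delta' \cap \mathcal{T}$, so the finite case gives an embedding $\pi_{1}(\mathcal{G}(\Delta'), \Delta' \cap \mathcal{T}) \hookrightarrow \pi_{1}(\mathcal{G}(\Gamma), \mathcal{T})$ by the identity on generators. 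Since all the homomorphisms in sight are the identity on generators, they commute, and therefore $\iota(g) \ne 1$; thus $\iota$ is injective.

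The step I expect to require the most care is not the group theory --- the normal form theorems for amalgamated free products and HNN-extensions, together with the resulting embeddings of factors and base groups, are classical --- but the bookkeeping around the direct limit: checking that the ``good'' finite subgraphs really are directed and cofinal, and that $\pi_{1}$ turns this system into a direct limit (a version of the standard fact that a presented group is the direct limit of the groups of its finite sub-presentations). The elementary combinatorial lemma that a finite tree properly containing a subtree has, once rooted inside that subtree, a leaf outside it also has to be recorded, but it presents no real difficulty.
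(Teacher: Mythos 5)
The paper does not prove this proposition at all: it is imported verbatim from \cite[Proposition~1]{Sokolov2021SMJ1}, so there is no in-paper argument to compare yours against. Judged on its own terms, your proof is correct and follows the standard route: the identity on generators is a homomorphism because the presentation of $\pi_{1}(\mathcal{G}(\Delta),\Delta\cap\mathcal{T})$ is a sub-presentation of that of $\pi_{1}(\mathcal{G}(\Gamma),\mathcal{T})$; for finite graphs you peel off either a non-tree edge (exhibiting the larger group as an HNN-extension of the smaller, whose base embeds by Britton's lemma once one knows the vertex groups, and hence the edge subgroups, embed in the base --- a fact the paper records from Serre) or a leaf outside $\Delta$ (exhibiting it as an amalgamated free product); and the infinite case follows by writing both fundamental groups as directed colimits over ``good'' finite subgraphs with injective, identity-on-generators connecting maps. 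The two points you flag as needing care are indeed the only delicate ones, and both go through: the convexity of the subtree $\Delta\cap\mathcal{T}$ in $\mathcal{T}$ guarantees that a vertex outside $\Delta$ of maximal depth is a leaf of $\Gamma$ in your second case, and the good finite subgraphs are directed and cofinal because any two can be joined by a $\mathcal{T}$-path whose union with them still meets $\mathcal{T}$ in a spanning subtree. I see no gap.
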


\begin{proposition}\label{p2202}
If $\Gamma$ is a~tree\textup{,} then $\pi_{1}(\mathcal{G}(\Gamma))$ can be embedded in~the~HNN-extension
$$
\mathbb{E} = \big\langle G_{v}^{\vphantom{1}}\ (v \in \mathcal{V}),\ t_{e}^{\vphantom{1}}\ (e \in \mathcal{E});\ 
t_{e}^{-1}h\varphi_{+e}^{\vphantom{1}}t_{e}^{\vphantom{1}} = h\varphi_{-e}^{\vphantom{1}}\ (e \in \mathcal{E},\ h \in H_{e}^{\vphantom{1}}) \big\rangle.
$$
\end{proposition}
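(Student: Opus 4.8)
The plan is to realize $\pi_{1}(\mathcal{G}(\Gamma))$ as a retract of $\mathbb{E}$. Since $\Gamma$ is a tree, it is its own maximal tree, so by the presentation recalled above $\pi_{1}(\mathcal{G}(\Gamma)) = \pi_{1}(\mathcal{G}(\Gamma),\Gamma)$ is generated by the vertex groups $G_{v}$ ($v\in\mathcal{V}$) subject only to the relations of the $G_{v}$ and the amalgamating relations $h\varphi_{+e} = h\varphi_{-e}$ ($e\in\mathcal{E}$, $h\in H_{e}$), with no stable letters present. Consequently there is an obvious epimorphism $\rho\colon\mathbb{E}\to\pi_{1}(\mathcal{G}(\Gamma))$ acting identically on every $G_{v}$ and sending each $t_{e}$ to~$1$; it is well defined because the defining relation $t_{e}^{-1}(h\varphi_{+e})t_{e} = h\varphi_{-e}$ of~$\mathbb{E}$ turns into $h\varphi_{+e} = h\varphi_{-e}$. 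It then suffices to construct a homomorphism $\psi\colon\pi_{1}(\mathcal{G}(\Gamma))\to\mathbb{E}$ with $\psi\rho = \mathrm{id}$, since such a~$\psi$ is automatically injective.

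To define~$\psi$, I would fix a vertex~$v_{0}$ and regard $\Gamma$ as rooted at~$v_{0}$. Introduce elements $\gamma_{v}\in\mathbb{E}$ ($v\in\mathcal{V}$), determined by $\gamma_{v_{0}} = 1$ together with the requirement $\gamma_{e(-1)} = t_{e}^{-1}\gamma_{e(1)}$ for every $e\in\mathcal{E}$; since every non-root vertex is joined to its parent by exactly one edge, this prescription has a unique solution, obtained by reading off the relevant stable letters along the path from~$v_{0}$. Then set $\psi$, on each $G_{v}$, to be conjugation by~$\gamma_{v}$ of the canonical image of $G_{v}$ in~$\mathbb{E}$, i.e.\ $g\psi = \gamma_{v}^{-1}g\gamma_{v}$ for $g\in G_{v}$. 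The relations internal to each $G_{v}$ are obviously respected, and for an edge~$e$ with $a = e(1)$, $b = e(-1)$ and $h\in H_{e}$ the computation that must work out is, in~$\mathbb{E}$,
\[
(h\varphi_{-e})\psi = \gamma_{b}^{-1}\,t_{e}^{-1}(h\varphi_{+e})t_{e}\,\gamma_{b} = (t_{e}\gamma_{b})^{-1}(h\varphi_{+e})(t_{e}\gamma_{b}) = \gamma_{a}^{-1}(h\varphi_{+e})\gamma_{a} = (h\varphi_{+e})\psi ,
\]
where the first equality uses $h\varphi_{-e} = t_{e}^{-1}(h\varphi_{+e})t_{e}$ in~$\mathbb{E}$ and the third uses $t_{e}\gamma_{b} = \gamma_{a}$, which is exactly the defining relation $\gamma_{e(-1)} = t_{e}^{-1}\gamma_{e(1)}$. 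Hence $\psi$ is a well-defined homomorphism. Finally, since $\rho$ annihilates every $t_{e}$ it annihilates every $\gamma_{v}$, so $g\psi\rho = g$ for $g\in G_{v}$; as the $G_{v}$ generate $\pi_{1}(\mathcal{G}(\Gamma))$, this gives $\psi\rho = \mathrm{id}$, and $\psi$ is the required embedding.

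The argument is almost entirely formal; the one idea that matters is the choice of the twisting elements~$\gamma_{v}$, designed precisely so that the amalgamating relations of the tree product become consequences of the HNN relations of~$\mathbb{E}$ and the displayed chain of equalities closes up. This is also the only place where the hypothesis that $\Gamma$ is a tree genuinely enters: it guarantees both that the requirements $\gamma_{e(-1)} = t_{e}^{-1}\gamma_{e(1)}$ can be met simultaneously (on a graph with a cycle they would in general be incompatible) and that $\pi_{1}(\mathcal{G}(\Gamma))$ carries no stable letters of its own, so that $\rho$ exists. The remaining points — that each restriction $g\mapsto\gamma_{v}^{-1}g\gamma_{v}$ is a homomorphism $G_{v}\to\mathbb{E}$, that the recursion defining the~$\gamma_{v}$ is consistent, and that the $G_{v}$ generate $\pi_{1}(\mathcal{G}(\Gamma))$ — are routine.
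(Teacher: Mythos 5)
Your proof is correct, but it follows a genuinely different route from the paper's. You exhibit $\pi_{1}(\mathcal{G}(\Gamma))$ as a retract of $\mathbb{E}$: the projection $\rho$ killing the stable letters, together with the section $\psi$ that twists each $G_{v}$ by a word $\gamma_{v}$ in the $t_{e}$'s read off along the unique path from a root, so that the HNN relations of $\mathbb{E}$ swallow the amalgamating relations of the tree product; injectivity of $\psi$ then falls out of $\psi\rho=\mathrm{id}$. All the individual steps check out: the recursion for the $\gamma_{v}$ is consistent precisely because $\Gamma$ is a tree (each non-root vertex has a unique parent edge), the displayed computation correctly verifies the edge relations, and the argument works verbatim for infinite trees since each $\gamma_{v}$ is still a finite product. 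The paper instead identifies $\mathbb{E}$ with a split extension, by the free group $T$ on $\{t_{e}\}$, of the fundamental group of a graph of groups over a forest $\Gamma'$ with vertex set $\mathcal{V}\times T$ (the normal closure of the base group), embeds $\Gamma$ into $\Gamma'$, and invokes Proposition~\ref{p1201}. Your argument is shorter and more elementary, needing only presentations and von Dyck's theorem rather than the subgroup structure of HNN-extensions; note that your $\gamma_{v}$ are exactly the elements $t$ indexing the image $(v,t)$ of $v$ in the paper's embedding of $\Gamma$ into $\Gamma'$, so the two constructions produce the same copy of $\pi_{1}(\mathcal{G}(\Gamma))$ inside $\mathbb{E}$. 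What the paper's heavier machinery buys is an explicit description of the normal closure of the base group as a forest product, but since the later application (Proposition~\ref{p2203}) uses only the bare embedding statement together with a separate appeal to Proposition~\ref{p1106}, your proof would serve the paper's purposes equally well.
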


\begin{proof}
It is well known that the~normal closure of~the~base group of~an~HNN-extension is a~tree product of~isomorphic copies of~this group. Since the~base group~$\mathbb{P}$ of~the~HNN-extension~$\mathbb{E}$ is the~free product of~the~groups~$G_{v}$ ($v \in \mathcal{V}$), its normal closure turns~out to~be a~``forest'' product (which corresponds in~the~general case to~a~forest, not a~tree) of~isomorphic copies of~the~groups~$G_{v}$ ($v \in \mathcal{V}$). We describe in~more detail how this product is structured and~then indicate a~subtree whose fundamental group is isomorphic to~$\pi_{1}(\mathcal{G}(\Gamma))$.

Let $T$ be a~free group with~basis $\{t_{e} \mid e \in \mathcal{E}\}$. Consider the~graph~$\Gamma^{\prime}$ with~the~set of~vertices $\mathcal{V}^{\prime} = \mathcal{V} \times T$ and~the~set of~edges~$\mathcal{E}^{\prime}$ indexed by~the~set~$\mathcal{E} \times T$ and~defined as~follows: for~any $e \in \mathcal{E}$, $t \in T$, the~edge~$e^{\prime}$ with~index~$(e,t)$ connects the~vertices $e^{\prime}(1) = \big(e(1),\,t_{e}t\big)$ and~$e^{\prime}(-1) = \big(e(-1),\,t\big)$. It~is easy to~see that $\Gamma^{\prime}$ has no multiple edges or~loops. To~prove the~acyclicity of~$\Gamma^{\prime}$, we show that if $L$ is a~simple chain in~this graph which has a~non-zero length and~joins a~vertex $u^{\prime}= (u,r)$ to~a~vertex $w^{\prime}= (w,s)$, then there exists an~element $\tau \in T \setminus \{1\}$ such that $\tau r = s$ and,~therefore, $u^{\prime}\ne w^{\prime}$.

Indeed, let $e^{\prime}$ be the~edge of~$L$ connecting the~vertices $\big(e(1),\,t_{e}t\big)$ and~$\big(e(-1),\,t\big)$ for~some $e \in \mathcal{E}$, $t \in T$. If~the~movement along the~chain (from~$u^{\prime}$ to~$w^{\prime})$ includes a~transition from~$\big(e(1),\,t_{e}t\big)$ to~$\big(e(-1),\,t\big)$, then we associate~$e^{\prime}$ with~the~element~$t_{e^{\prime}}^{\vphantom{1}} = t_{e}^{-1}$, otherwise we put $t_{e^{\prime}}^{\vphantom{1}} = t_{e}^{\vphantom{1}}$. Let us denote by~$\tau$ the~product of~all elements corresponding to~the~edges of~$L$ and~taken in~the~opposite order to~the~movement indicated above. Since $L$ is a~simple chain, its adjacent edges are associated with~elements that are not mutually inverse. Hence, $\tau$ (as~an~element of~$T$) has a~reduced form of~non-zero length and,~therefore, is non-trivial~\cite[\S~2.1]{Robinson1996}. The~equality $\tau r = s$ follows from~the~definition of~the~elements~$t_{e^{\prime}}$.

For any $v \in \mathcal{V}$, $t \in T$, we denote by~$G_{v,t}$ an~isomorphic copy of~$G_{v}$ and~by~$\iota_{v,t}$ the~isomorphism $G_{v} \to G_{v,t}$. Let $\mathcal{G}^{\prime}(\Gamma^{\prime})$ be the~graph of~groups such that the~group~$G_{v,t}$ is~assigned to~the~vertex $(v,t) \in \mathcal{V}^{\prime}$ ($v \in \mathcal{V}$, $t \in T$), while the~group~$H_{e}$ and~the~homomorphisms
$$
\varphi_{+e^{\prime}} = \varphi_{+e}\iota_{e(1),\,t_{e}t},\quad 
\varphi_{-e^{\prime}} = \varphi_{-e}\iota_{e(-1),\,t}
$$
are assigned to~the~edge $e^{\prime} \in \mathcal{E}^{\prime}$ with~index~$(e,t)$ ($e \in \mathcal{E}$, $t \in T$). For~any element $\tau \in T$, we consider the~mapping of~the~vertices of~$\Gamma^{\prime}$ defined by~the~rule $(v,t) \mapsto (v,t\tau)$ and~the~corresponding isomorphisms of~the~vertex groups $\iota_{v,t}^{-1}\iota_{v,t\tau}^{\vphantom{1}}$. It~is easy to~see that these mappings induce automorphisms of~$\Gamma^{\prime}$ and~$\mathcal{G}^{\prime}(\Gamma^{\prime})$, and,~hence, an~automorphism~$\alpha_{\tau}$ of~$\pi_{1}(\mathcal{G}^{\prime}(\Gamma^{\prime}))$. It~is also obvious that $\alpha_{\tau_{1}\tau_{2}} = \alpha_{\tau_{1}}\alpha_{\tau_{2}}$ for~any $\tau_{1}, \tau_{2} \in T$. Therefore, we can consider the~split extension~$S$ of~$\pi_{1}(\mathcal{G}^{\prime}(\Gamma^{\prime}))$ by~$T$ such that the~conjugation by~$\tau \in T$ acts on~$\pi_{1}(\mathcal{G}^{\prime}(\Gamma^{\prime}))$~as~$\alpha_{\tau}$.

The~group~$S$ has the~representation
$$
\left\langle 
\begin{aligned}
&G_{v,t}\ (v \in \mathcal{V},\ t \in T),\\ 
&t_{e}\ (e \in \mathcal{E})
\end{aligned}
\ \left|\ 
\begin{aligned}
\tau^{-1}g\tau &= g\alpha_{\tau}\ (g \in G_{v,t},\ v \in \mathcal{V},\ t,\tau \in T),\\ 
h\varphi_{+e}\iota_{e(1),\,t_{e}t} &= h\varphi_{-e}\iota_{e(-1),\,t}\ (e \in \mathcal{E},\ h \in H_{e},\ t \in T)
\end{aligned}
\right.
\right\rangle.
$$
Since, for~any $v \in \mathcal{V}$, $g \in G_{v}$, $\tau \in T$, the~equalities
$$
\tau^{-1}g\iota_{v,1}^{\vphantom{1}}\tau = g\iota_{v,1}^{\vphantom{1}}\alpha_{\tau} = g\iota_{v,1}^{\vphantom{1}}\iota_{v,1}^{-1}\iota_{v,\tau}^{\vphantom{1}} = g\iota_{v,\tau}^{\vphantom{1}}
$$
hold in~$S$, the~generators of~$G_{v,t}$ ($v \in \mathcal{V}$, $t \in T \setminus \{1\}$) can be excluded from~this representation together with~the~relations
$$
\tau^{-1}g\tau = g\alpha_{\tau}\quad (g \in G_{v,1},\ v \in \mathcal{V},\ \tau \in T).
$$
As~a~result, the~relations
$$
\tau^{-1}g\tau = g\alpha_{\tau}\quad (g \in G_{v,t},\ v \in \mathcal{V},\ t \in T \setminus \{1\},\ \tau \in T)
$$
turn into~identities, the~relations
$$
h\varphi_{+e}\iota_{e(1),\,t_{e}t} = h\varphi_{-e}\iota_{e(-1),\,t}\quad (e \in \mathcal{E},\ h \in H_{e},\ t \in T)
$$
take the~form
$$
t_{e}^{-1}(h\varphi_{+e}^{\vphantom{1}}\iota_{e(1),1}^{\vphantom{1}})t_{e}^{\vphantom{1}} = h\varphi_{-e}^{\vphantom{1}}\iota_{e(-1),1}^{\vphantom{1}}\quad (e \in \mathcal{E},\ h \in H_{e}),
$$
and~the~representation of~$S$ can be turned into~the~representation of~$\mathbb{E}$ by~identifying the~elements of~$G_{v}$ ($v \in \mathcal{V}$) and~their images under~$\iota_{v,1}$. Therefore, $\mathbb{E} \cong S$.

Now let us build an~embedding of~$\Gamma$ into~$\Gamma^{\prime}$. To~do this, we fix some vertex $u \in \mathcal{V}$ and~argue by~induction on~the~length of~a~(unique) path in~the~tree~$\Gamma$ joining an~arbitrarily chosen vertex $v \in \mathcal{V}$~to~$u$.

Each vertex $v \in \mathcal{V}$ will be mapped to~a~vertex of~the~form~$(v,t)$ for~some $t \in T$. Let us associate~$u$ with~$(u,1)$. If~$v \in \mathcal{V}$ is a~vertex other than~$u$, $e$ is the~last edge of~the~path joining~$u$ to~$v$, $\varepsilon = \pm 1$ is the~number satisfying the~equality $v = e(\varepsilon)$, and~the~vertex~$e(-\varepsilon)$ corresponds to~the~vertex $\big(e(-\varepsilon),\, t\big)$ for~some $t \in T$, then we associate the~vertex $v = e(\varepsilon)$ with~$\big(e(\varepsilon),\, t_{e}^{\varepsilon}t\big)$. Let $e^{\prime}$ be the~edge of~$\Gamma^{\prime}$ with~index~$(e,t)$ if~$\varepsilon = 1$, or~with~index~$\big(e,\, t_{e}^{-1}t\big)$ if~$\varepsilon = -1$. Then the~equalities $e^{\prime}(\varepsilon) = \big(e(\varepsilon),\, t_{e}^{\varepsilon}t\big)$ and~$e^{\prime}(-\varepsilon) =\nolinebreak \big(e(-\varepsilon),\, t\big)$ hold by~the~definition of~$\Gamma^{\prime}$. Therefore, the~constructed mapping of~vertices defines the~desired embedding of~$\Gamma$ into~$\Gamma^{\prime}$. It~is also easy to~see that, in~combination with~the~isomorphisms~$\iota_{v,t}$ ($v \in \mathcal{V}$, $t \in T$), it determines an~embedding of~$\mathcal{G}(\Gamma)$ into~$\mathcal{G}^{\prime}(\Gamma^{\prime})$. Thus, by~Proposition~\ref{p1201}, the~group~$\pi_{1}(\mathcal{G}(\Gamma))$ can be embedded into~the~group~$\pi_{1}(\mathcal{G}^{\prime}(\Gamma^{\prime}))$, as~required.
\end{proof}

\section{Proof of~Theorem~\ref{t2201}}

Given a~class of~groups~$\mathcal{C}$ and~a~group~$X$, we denote by~$\mathcal{C}^{*}(X)$ the~family of~normal subgroups of~$X$ such that $Y \in \mathcal{C}^{*}(X)$ if and~only if $X/Y \in \mathcal{C}$.

\begin{proposition}\label{p1306}
If $\mathcal{C}$ is a~class of~groups closed under~taking subgroups\textup{,} $X$~is a~group\textup{,} $Y$~and~$Z$ are its subgroups\textup{,} and~$Y \in \mathcal{C}^{*}(X)$\textup{,} then $Y \cap Z \in \mathcal{C}^{*}(Z)$.
\end{proposition}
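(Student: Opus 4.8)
The plan is to unwind the definitions and use the second isomorphism theorem. By definition, $Y \in \mathcal{C}^{*}(X)$ means $Y \trianglelefteq X$ and $X/Y \in \mathcal{C}$. To show $Y \cap Z \in \mathcal{C}^{*}(Z)$, I must verify two things: first, that $Y \cap Z$ is normal in $Z$; second, that the quotient $Z/(Y \cap Z)$ lies in $\mathcal{C}$.

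First I would check normality: since $Y \trianglelefteq X$ and $Z \leq X$, the intersection $Y \cap Z$ is normal in $Z$. This is the standard fact that the intersection of a normal subgroup with an arbitrary subgroup is normal in that subgroup — for $z \in Z$ we have $z^{-1}(Y \cap Z)z \subseteq z^{-1}Yz \cap z^{-1}Zz = Y \cap Z$.

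Next I would identify the quotient. By the second (diamond) isomorphism theorem applied inside $X$, the natural map $Z \to X/Y$ sending $z \mapsto zY$ has kernel $Z \cap Y$, so it induces an embedding $Z/(Y \cap Z) \hookrightarrow X/Y$ (its image is the subgroup $ZY/Y$). Since $X/Y \in \mathcal{C}$ and $\mathcal{C}$ is closed under taking subgroups, it follows that $Z/(Y \cap Z) \in \mathcal{C}$. Combining the two observations gives $Y \cap Z \in \mathcal{C}^{*}(Z)$.

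There is no real obstacle here — the statement is a routine consequence of the second isomorphism theorem together with the single hypothesis that $\mathcal{C}$ is subgroup-closed. The only point requiring any care is to state the isomorphism theorem in the correct ambient group $X$ (rather than attempting to work inside $Z$ alone), so that $ZY$ makes sense and the quotient $Z/(Y \cap Z)$ is correctly realized as a subgroup of $X/Y$.
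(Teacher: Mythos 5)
Your proof is correct and follows essentially the same route as the paper: the second isomorphism theorem gives $Z/(Y\cap Z)\cong ZY/Y\leqslant X/Y\in\mathcal{C}$, and subgroup-closedness of $\mathcal{C}$ finishes the argument. The only difference is that you spell out the normality of $Y\cap Z$ in $Z$, which the paper leaves implicit.
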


\begin{proof}
Indeed, $Z/Y \cap Z \cong ZY/Y \leqslant X/Y \in \mathcal{C}$ and~$Z/Y \cap Z \in \mathcal{C}$ because $\mathcal{C}$ is closed under~taking subgroups.
\end{proof}

\begin{proposition}\label{p1401}
\textup{\cite[Theorem~1]{Sokolov2015CA}}
If $\mathcal{C}$ is a~class of~groups closed under~taking subgroups\textup{,} then the~following statements are equivalent.

\textup{1.}\hspace{1ex}The~class~$\mathcal{C}$ satisfies the~\emph{Gruenberg condition}\textup{:} for~any group~$X$ and~for~any subnormal series $1 \leqslant Z \leqslant Y \leqslant X$ whose factors $X/Y$ and~$Y/Z$ belong to~$\mathcal{C}$\textup{,} there exists a~subgroup $T \in \mathcal{C}^{*}(X)$ such that $T \leqslant Z$.

\textup{2.}\hspace{1ex}The~class~$\mathcal{C}$ is closed under~taking Cartesian wreath products.
\pagebreak

\textup{3.}\hspace{1ex}The~class~$\mathcal{C}$ is closed under~taking extensions and\textup{,} for~any two groups $X, Y \in \mathcal{C}$\textup{,} contains the~Cartesian product $\prod_{y \in Y}X_{y}$\textup{,} where $X_{y}$ is an~isomorphic copy of~$X$ for~each $y \in Y$.
\end{proposition}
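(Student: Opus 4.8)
The plan is to run the cycle of implications $1 \Rightarrow 2 \Rightarrow 3 \Rightarrow 1$, using throughout that $\mathcal{C}$ is closed under subgroups. For $1 \Rightarrow 2$, given $A, B \in \mathcal{C}$ I would work inside the Cartesian wreath product $X = A \wr B$, write its base group as $P = \prod_{b \in B} A_{b}$ with each $A_{b} \cong A$, fix a coordinate $b_{0} \in B$, and set $Z = \prod_{b \neq b_{0}} A_{b}$. Then $1 \leqslant Z \leqslant P \leqslant X$ is a subnormal series with $P/Z \cong A \in \mathcal{C}$ and $X/P \cong B \in \mathcal{C}$, so the Gruenberg condition produces a normal subgroup $T$ of $X$ with $T \leqslant Z$ and $X/T \in \mathcal{C}$. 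The key point is that $B$ permutes the coordinates of $P$ transitively, so as $x$ ranges over $X$ the conjugate $x^{-1}Zx$ runs over all ``coordinate hyperplanes'' $\{f \in P : f(c) = 1\}$; being normal in $X$ and contained in $Z$, the subgroup $T$ lies in every one of them, whence $T = 1$ and $A \wr B = X/T \in \mathcal{C}$. (When $B$ is trivial the claim is immediate.)

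For $2 \Rightarrow 3$, the Cartesian-power condition comes for free: $\prod_{b \in B} A_{b}$ is, up to isomorphism, the base group of $A \wr B \in \mathcal{C}$, hence a subgroup of a $\mathcal{C}$-group. Closure under extensions then follows from the Kaloujnine--Krasner embedding theorem, which realizes every extension of $A$ by $B$ as a subgroup of $A \wr B$.

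For $3 \Rightarrow 1$, given a subnormal series $1 \leqslant Z \leqslant Y \leqslant X$ with $X/Y, Y/Z \in \mathcal{C}$, I would let $N = \bigcap_{x \in X} x^{-1}Zx$ be the core of $Z$ in $X$. Because $Z$ is normal in $Y$, each conjugate $x^{-1}Zx$ lies in $Y$ and depends only on the coset $Yx$, so the natural homomorphism of $Y$ into the Cartesian product of the groups $Y/(x^{-1}Zx)$ taken over the distinct cosets $Yx$ has kernel exactly $N$. Each factor is isomorphic to $Y/Z$ and there are $|X/Y|$ of them, so after re-indexing we obtain an embedding of $Y/N$ into $\prod_{b \in B} A_{b}$ with $A = Y/Z \in \mathcal{C}$, $B = X/Y \in \mathcal{C}$, and $A_{b} \cong A$. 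By statement 3 this Cartesian power belongs to $\mathcal{C}$, hence so does $Y/N$; then $X/N$, as an extension of $X/Y$ by $Y/N$, also belongs to $\mathcal{C}$, and $T = N \leqslant Z$ is the subgroup required by statement 1.

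The only step calling for an actual idea is $1 \Rightarrow 2$. At first sight the Gruenberg condition --- which merely allows one to descend below a subnormal term of a two-step $\mathcal{C}$-by-$\mathcal{C}$ series --- looks far weaker than closure under Cartesian wreath products; the device that bridges the gap is to apply the condition inside $A \wr B$ with $Z$ chosen as a coordinate hyperplane and then to use transitivity of the $B$-action to collapse the resulting normal subgroup to the identity. Everything else is routine bookkeeping with the Kaloujnine--Krasner theorem and with cores of subgroups.
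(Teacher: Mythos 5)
This proposition is not proved in the paper: it is imported by citation from \cite[Theorem~1]{Sokolov2015CA}, so there is no in-paper argument to compare against. Your cycle $1\Rightarrow 2\Rightarrow 3\Rightarrow 1$ is correct and is essentially the standard proof from that source (and from Gruenberg's original work for $1\Leftrightarrow 2$): applying the Gruenberg condition to the series $1\leqslant Z\leqslant P\leqslant A\wr B$ with $Z$ a coordinate hyperplane and collapsing $T$ by transitivity of the coordinate action, then using the base group and the Kaloujnine--Krasner embedding for $2\Rightarrow 3$, and the core of $Z$ in $X$ for $3\Rightarrow 1$.
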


\begin{proposition}\label{p1405}
If $\mathcal{C}$ is a~root class of~groups\textup{,} then the~following statements hold.

\textup{1.}\hspace{1ex}Every free group is residually a~$\mathcal{C}$\nobreakdash-group~\textup{\cite[Theorem~1]{AzarovTieudjo2002PIvSU}}.

\textup{2.}\hspace{1ex}The~free product of~any number of~residually $\mathcal{C}$\nobreakdash-groups is residually a~$\mathcal{C}$\nobreakdash-group \textup{\cite[The\-o\-rem~4.1]{Gruenberg1957PLMS};} \textup{\cite[Theorem~2]{AzarovTieudjo2002PIvSU}}.

\textup{3.}\hspace{1ex}Any extension of~a~residually $\mathcal{C}$\nobreakdash-group by~a~$\mathcal{C}$\nobreakdash-group is again residually a~$\mathcal{C}$\nobreakdash-group\linebreak \textup{\cite[Lemma~1.5]{Gruenberg1957PLMS}}.
\end{proposition}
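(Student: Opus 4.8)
The plan is to prove the three assertions in the order 3, 1, 2, since each later part uses the earlier ones, reducing everything either to the Gruenberg condition (Proposition~\ref{p1401}) or to a classical residual property of free groups. For statement~3, let $1 \to N \to G \to Q \to 1$ be an extension with $N$ residually a $\mathcal{C}$\nobreakdash-group and $Q \in \mathcal{C}$, and fix $g \in G \setminus \{1\}$. If $g \notin N$, the quotient map $G \to Q \in \mathcal{C}$ already separates $g$ from~$1$. If $g \in N$, I would use that $N$ is residually a $\mathcal{C}$\nobreakdash-group to choose $M \in \mathcal{C}^{*}(N)$ with $g \notin M$; then $1 \leqslant M \leqslant N \leqslant G$ is a subnormal series with $G/N = Q \in \mathcal{C}$ and $N/M \in \mathcal{C}$, so the Gruenberg condition (Proposition~\ref{p1401}) furnishes $T \in \mathcal{C}^{*}(G)$ with $T \leqslant M$. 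As $g \notin M \supseteq T$, the map $G \to G/T \in \mathcal{C}$ separates $g$ from~$1$.

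For statement~1, I would first observe that since $\mathcal{C}$ contains a non-trivial group and is closed under subgroups, it contains a non-trivial cyclic group, hence either $\mathbb{Z}$ or $\mathbb{Z}/p\mathbb{Z}$ for some prime~$p$. In the first case, closure under extensions puts every finitely generated torsion-free nilpotent group (a poly-infinite-cyclic group) into~$\mathcal{C}$; in the second, it puts every finite $p$\nobreakdash-group into~$\mathcal{C}$. Correspondingly, a finitely generated free group~$F$ is separated either by its residual nilpotence (the quotients $F/\gamma_{c}(F)$ are finitely generated torsion-free nilpotent and have trivial intersection) or by its residual $p$\nobreakdash-finiteness, both classical. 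An arbitrary free group reduces to the finitely generated case by retracting onto the subgroup generated by the finitely many basis elements occurring in a given non-trivial word, a retraction that fixes that word.

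For statement~2, let $\mathbb{P} = \ast_{\lambda \in \Lambda} A_{\lambda}$ with each $A_{\lambda}$ residually a $\mathcal{C}$\nobreakdash-group, and take $w \in \mathbb{P} \setminus \{1\}$ in reduced form. Only finitely many factors occur in~$w$, so retracting $\mathbb{P}$ onto their free product preserves~$w$ and reduces the problem to a finite free product $A_{1} \ast \dots \ast A_{m}$. Using residual $\mathcal{C}$\nobreakdash-approximability of each $A_{i}$ (and finite intersections of members of $\mathcal{C}^{*}(A_{i})$, which stay in $\mathcal{C}^{*}(A_{i})$ since $\mathcal{C}$ is closed under extensions), I would choose $N_{i} \in \mathcal{C}^{*}(A_{i})$ avoiding every syllable of~$w$ lying in~$A_{i}$; the induced map onto $(A_{1}/N_{1}) \ast \dots \ast (A_{m}/N_{m})$ then keeps the reduced form of~$w$ intact, so its image is again non-trivial. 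This reduces everything to showing that a finite free product $\mathbb{Q} = B_{1} \ast \dots \ast B_{m}$ of $\mathcal{C}$\nobreakdash-groups is residually a $\mathcal{C}$\nobreakdash-group. Here I would take the canonical map $\mathbb{Q} \to B_{1} \times \dots \times B_{m}$; it is injective on every conjugate of every factor, so its kernel~$K$ meets each such conjugate trivially, whence by the Kurosh theorem (Proposition~\ref{p1107}) the group~$K$ is free and thus residually a $\mathcal{C}$\nobreakdash-group by statement~1, while $\mathbb{Q}/K$ embeds in the finite direct product $B_{1} \times \dots \times B_{m} \in \mathcal{C}$. Therefore $\mathbb{Q}$ is an extension of a residually-$\mathcal{C}$ group by a $\mathcal{C}$\nobreakdash-group and is residually a $\mathcal{C}$\nobreakdash-group by statement~3.

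The routine bookkeeping (checking that finite products of $\mathcal{C}$\nobreakdash-groups assembled by extensions remain in~$\mathcal{C}$, and that the retractions fix the chosen element) is straightforward. The main obstacle is the free-product step of statement~2: one must verify that the kernel of the map to the direct product really does intersect every conjugate factor trivially, so that Kurosh delivers a free group, and that descending to the finitely many relevant factors loses nothing. This is precisely where the combinatorics of reduced forms in free products carries the argument, after which statements~1 and~3 together with the Gruenberg condition supply the remainder.
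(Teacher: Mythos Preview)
The paper does not supply its own proof of this proposition: each of the three assertions is quoted from the literature with an explicit citation (Gruenberg and Azarov--Tieudjo), and no argument is given in the paper. Your proposal is a correct, self-contained reconstruction of these classical facts, and the route you take---Gruenberg condition for~(3), residual torsion-free nilpotence or residual $p$-finiteness of free groups for~(1), and for~(2) a reduction to a finite free product of $\mathcal{C}$-groups followed by the Kurosh theorem applied to the kernel of the map onto the direct product---is essentially the standard one underlying the cited sources. Since the paper itself offers nothing to compare against beyond the references, there is no divergence of method to discuss; your proof would serve perfectly well as an inline justification.
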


For~any family of~groups~$\Omega$, we denote by~$\mathcal{P}(\Omega)$ the~class of~groups consisting of~(ordinary) free products, each factor of~which is a~free group or~can be embedded in~a~group from~$\Omega$. It~follows from~Proposition~\ref{p1107} that this class is closed under~taking subgroups. Therefore, if $\Theta$ is a~family of~$\mathcal{P}(\Omega)$\nobreakdash-groups, then $\mathcal{P}(\Theta) \subseteq \mathcal{P}(\Omega)$.

\begin{proposition}\label{p2203}
Suppose that $\mathcal{C}$ is a~root class of~groups and~there exists a~homomorphism~$\sigma$ of~$\pi_{1}(\mathcal{G}(\Gamma))$ onto~a~$\mathcal{C}$\nobreakdash-group that acts injectively on~all edge subgroups~$H_{\varepsilon e}$ \textup{(}$e \in \mathcal{E}$\textup{,} $\varepsilon = \pm 1$\textup{)}. If~$N = \operatorname{ker}\sigma$ and~$\Omega = \{N \cap G_{v} \mid v \in \mathcal{V}\}$\textup{,} then $\pi_{1}(\mathcal{G}(\Gamma))$ is an~extension of~a~$\mathcal{P}(\Omega)$\nobreakdash-group by~a~$\mathcal{C}$\nobreakdash-group.
\end{proposition}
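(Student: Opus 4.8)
The plan is to show that $N$ is a $\mathcal{P}(\Omega)$-group. Once this is done, the statement follows at once: since $\sigma$ maps $\pi_1(\mathcal{G}(\Gamma))$ onto a $\mathcal{C}$-group, the quotient $\pi_1(\mathcal{G}(\Gamma))/N$ is a $\mathcal{C}$-group, so $\pi_1(\mathcal{G}(\Gamma))$ is an extension of the $\mathcal{P}(\Omega)$-group~$N$ by a $\mathcal{C}$-group. Note that $N \cap H_{\varepsilon e} = 1$ for all $e \in \mathcal{E}$, $\varepsilon = \pm 1$, because $\sigma$ is injective on every $H_{\varepsilon e}$. I would first settle the case in which $\Gamma$ is a tree and then reduce the general situation to it.

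Suppose $\Gamma$ is a tree; we may assume $\mathcal{E} \ne \varnothing$, the opposite case being trivial (then $\pi_1(\mathcal{G}(\Gamma)) = G_{v}$ and $N = N \cap G_{v} \in \Omega$). By Proposition~\ref{p2202}, $\pi_1(\mathcal{G}(\Gamma))$ embeds, via a homomorphism~$\iota$, into the group~$\mathbb{E}$ from that proposition: the HNN-extension of the free product~$\mathbb{P}$ of the groups~$G_{v}$ ($v \in \mathcal{V}$) with stable letters $t_{e}$ ($e \in \mathcal{E}$), the associated subgroups of~$t_{e}$ being $H_{+e}$ and $H_{-e}$ (viewed inside~$\mathbb{P}$). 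Let $q \colon \mathbb{E} \to \pi_1(\mathcal{G}(\Gamma))$ be the retraction sending every~$t_{e}$ to~$1$ and acting identically on the generators of each~$G_{v}$; it is well defined because the relations $t_{e}^{-1}(h\varphi_{+e})t_{e} = h\varphi_{-e}$ turn into $h\varphi_{+e} = h\varphi_{-e}$, which hold in $\pi_1(\mathcal{G}(\Gamma))$ as $\Gamma$ is a tree. The restriction of~$q$ to~$\mathbb{P}$ is the canonical homomorphism of the free product of the~$G_{v}$ onto their tree product $\pi_1(\mathcal{G}(\Gamma))$; in particular $q$ is injective on each~$G_{v}$, hence on each~$H_{\varepsilon e}$. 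Furthermore, the proof of Proposition~\ref{p2202} sends each~$G_{v}$ to a subgroup of the form $w^{-1}G_{v}w$ with~$w$ a word in the~$t_{e}$; therefore $q\iota$ is the identity on every~$G_{v}$ and, since these groups generate $\pi_1(\mathcal{G}(\Gamma))$, $q\iota = \mathrm{id}$. Now set $\bar\sigma = \sigma q$, a homomorphism of~$\mathbb{E}$ onto the image of~$\sigma$ (a $\mathcal{C}$-group), and $M = \operatorname{ker}\bar\sigma$. As $q$, and then~$\sigma$, are injective on each~$H_{\varepsilon e}$, so is~$\bar\sigma$, whence $M \cap H_{\varepsilon e} = 1$ for all $e, \varepsilon$. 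By Proposition~\ref{p1106}, applied to~$\mathbb{E}$ regarded as an HNN-extension with the single base group~$\mathbb{P}$, the subgroup~$M$ is the free product of a free group and copies of $M \cap \mathbb{P}$. Applying Proposition~\ref{p1107} to $M \cap \mathbb{P} \leqslant \mathbb{P}$ and using that~$M$ is normal in $\mathbb{E} \supseteq \mathbb{P}$ (so $x^{-1}(M \cap \mathbb{P})x = M \cap \mathbb{P}$ for every $x \in \mathbb{P}$), we see that $M \cap \mathbb{P}$ is the free product of a free group and copies of the subgroups $M \cap G_{v}$ ($v \in \mathcal{V}$); moreover $M \cap G_{v} \cong N \cap G_{v} \in \Omega$ because $q$ carries~$G_{v}$ isomorphically onto $G_{v} \leqslant \pi_1(\mathcal{G}(\Gamma))$ and $\bar\sigma = \sigma q$. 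Hence $M \cap \mathbb{P}$, and therefore~$M$, is a $\mathcal{P}(\Omega)$-group, using the remark that $\mathcal{P}$ of a family of $\mathcal{P}(\Omega)$-groups is contained in $\mathcal{P}(\Omega)$. Finally $\bar\sigma\iota = \sigma q\iota = \sigma$, so $\iota(N) = \iota(\pi_1(\mathcal{G}(\Gamma))) \cap M \leqslant M$, and since $\mathcal{P}(\Omega)$ is closed under taking subgroups, $N \cong \iota(N)$ is a $\mathcal{P}(\Omega)$-group.

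For the general case, fix a maximal tree~$\mathcal{T}$ in~$\Gamma$. If $\mathcal{E} = \mathcal{E}_{\mathcal{T}}$, we are done by the tree case, so assume $\mathcal{E} \setminus \mathcal{E}_{\mathcal{T}} \ne \varnothing$. Then $\pi_1(\mathcal{G}(\Gamma), \mathcal{T})$ is the HNN-extension of the tree product $P = \pi_1(\mathcal{G}(\mathcal{T}))$ with stable letters $t_{e}$ ($e \in \mathcal{E} \setminus \mathcal{E}_{\mathcal{T}}$) and associated subgroups $H_{+e}, H_{-e} \leqslant P$; since~$N$ intersects all of these trivially, Proposition~\ref{p1106} shows that~$N$ is the free product of a free group and copies of $N \cap P$. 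By Proposition~\ref{p1201}, $P$ embeds in $\pi_1(\mathcal{G}(\Gamma))$; furthermore $N \cap P = \operatorname{ker}(\sigma|_{P})$, the homomorphism $\sigma|_{P}$ maps~$P$ onto the $\mathcal{C}$-group $\sigma(P)$ and is injective on the edge subgroups of the tree product~$P$, and $(N \cap P) \cap G_{v} = N \cap G_{v}$. By the tree case, $N \cap P$ is a $\mathcal{P}(\Omega)$-group; hence so is~$N$, which completes the proof.

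The step I expect to be the main obstacle is the verification --- by inspection of the proof of Proposition~\ref{p2202} --- that the embedding~$\iota$ is a section of the retraction~$q$, for it is this that allows the structure of $M = \operatorname{ker}(\sigma q)$ to be transported back to $N = \operatorname{ker}\sigma$. The accompanying bookkeeping (that $\sigma q$ is still injective on the edge subgroups of~$\mathbb{E}$, and that the Kurosh decomposition of $M \cap \mathbb{P}$ has free factors equal, up to isomorphism, to the groups $N \cap G_{v}$) is routine but has to be carried out carefully.
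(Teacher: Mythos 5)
Your proof is correct, and while the tree case runs on the same engine as the paper's (embed $\pi_{1}(\mathcal{G}(\Gamma))$ into the HNN-extension~$\mathbb{E}$ of Proposition~\ref{p2202}, pull $N$ back to $M = \operatorname{ker}\sigma q$, and decompose $M$ via Propositions~\ref{p1106} and~\ref{p1107} --- your $M$ is exactly the paper's $K$), you extract a strictly stronger conclusion from it: that $N$ itself lies in $\mathcal{P}(\Omega)$, not merely that $\pi_{1}(\mathcal{G}(\mathcal{T}))$ is an extension of \emph{some} $\mathcal{P}(\Omega)$-subgroup by a $\mathcal{C}$-group. The price is the verification that the embedding $\iota$ of Proposition~\ref{p2202} is a section of the retraction~$q$ (true, since $\iota$ sends $G_{v}$ to $t^{-1}G_{v}t$ with $t$ a word in the stable letters, and the $G_{v}$ generate the tree product), which the paper never needs. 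The payoff is substantial in the general case: knowing $N \cap \pi_{1}(\mathcal{G}(\mathcal{T})) \in \mathcal{P}(\Omega)$, a single application of Proposition~\ref{p1106} to the HNN-extension over the tree product gives $N \in \mathcal{P}(\Omega)$ outright, whereas the paper --- working only with the weaker tree-case output --- must manufacture the auxiliary retraction $\theta$ onto $U$, pass to $M = \operatorname{ker}\theta\delta$, and invoke the Gruenberg condition (Proposition~\ref{p1401}) to locate a normal subgroup $L \in \mathcal{C}^{*}(\pi_{1}(\mathcal{G}(\Gamma))) \cap \mathcal{P}(\Omega)$. Your route thus proves a sharper statement ($\operatorname{ker}\sigma$ itself is the $\mathcal{P}(\Omega)$-group realizing the extension) with less machinery in the second half; the paper's route avoids any inspection of the internals of Proposition~\ref{p2202} beyond the bare existence of the embedding. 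Both are sound.
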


\begin{proof}
Assume first that $\Gamma$ is a~tree, and~consider the~HNN-extension~$\mathbb{E}$ from~Proposition~\ref{p2202}. Obviously, $\mathbb{T} = \pi_{1}(\mathcal{G}(\Gamma))$ is the~quotient group of~$\mathbb{E}$ by~the~normal closure of~the~set $\{t_{e} \mid e \in \mathcal{E}\}$. Let us denote by~$K$ the~preimage of~$N$ under~the~natural homomorphism $\varepsilon\colon \mathbb{E} \to \mathbb{T}$. Then $K \in \mathcal{C}^{*}(\mathbb{E})$ and,~because $\varepsilon$ acts on~every vertex group of~$\mathbb{E}$ as~the~identity mapping, the~equality $K \cap G_{v} = N \cap G_{v}$ holds for~any $v \in \mathcal{V}$. This implies, in~particular, that $K \cap H_{\varepsilon e} = 1$ for~all $e \in \mathcal{E}$, $\varepsilon = \pm 1$. Hence, by~Proposition~\ref{p1106}, $K \in \mathcal{P}\big(\{K \cap \mathbb{P}\}\big)$, where $\mathbb{P}$ is the~base group of~$\mathbb{E}$, i.\,e.,~the~free product of~groups~$G_{v}$~($v \in\nolinebreak \mathcal{V}$). According to~Proposition~\ref{p1107}, $K \cap \mathbb{P} \in \mathcal{P}(\Theta)$, where
$$
\Theta = \big\{(K \cap \mathbb{P}) \cap G_{v} \mid v \in \mathcal{V}\big\}.
$$
Since
$$
\Theta = \big\{K \cap G_{v} \mid v \in \mathcal{V}\big\} = \Omega,
$$
we have $K \cap \mathbb{P} \in \mathcal{P}(\Omega)$ and~$K \in \mathcal{P}(\Omega)$. By~Proposition~\ref{p2202}, $\mathbb{T}$ can be considered a~subgroup of~$\mathbb{E}$ and,~therefore, turns~out to~be an~extension of~$\mathbb{T} \cap K$ by~$\mathbb{T}/\mathbb{T} \cap K$. It~remains to~note that $\mathbb{T} \cap K \in \mathcal{P}(\Omega)$ and~$\mathbb{T}/\mathbb{T} \cap K \in \mathcal{C}$: this follows from~Proposition~\ref{p1306} and~the~fact that the~classes~$\mathcal{P}(\Omega)$ and~$\mathcal{C}$ are closed under~taking subgroups.

So, if $\Gamma$ is a~tree, the~proposition is proved. Assume now that $\Gamma$ is an~arbitrary connected graph and~$\mathcal{T}$ is a~maximal tree in~$\Gamma$ used to~construct a~representation of~$\pi_{1}(\mathcal{G}(\Gamma))$. Since the~restriction~$\sigma_{\mathcal{T}}$ of~$\sigma$ to~the~tree product~$\pi_{1}(\mathcal{G}(\mathcal{T}))$ acts injectively on~all edge subgroups of~this product~and
$$
\big\{\kern-2pt{}\operatorname{ker}\sigma_{\mathcal{T}} \cap G_{v} \mid v \in \mathcal{V}\big\} = \Omega,
$$
it follows from~the~above that $\pi_{1}(\mathcal{G}(\mathcal{T}))$ is an~extension of~some $\mathcal{P}(\Omega)$\nobreakdash-group~$P$ by~a~group from~$\mathcal{C}$. Let us put $U = N \cap \pi_{1}(\mathcal{G}(\mathcal{T}))$ and~$V = U \cap P$. Then $V \in \mathcal{C}^{*}(U) \cap \mathcal{P}(\Omega)$ because the~classes~$\mathcal{P}(\Omega)$ and~$\mathcal{C}$ are closed under~taking subgroups and~Proposition~\ref{p1306} can be applied to~the~subgroups $P \in \mathcal{C}^{*}(\pi_{1}(\mathcal{G}(\mathcal{T})))$~and~$U$.

Since $\pi_{1}(\mathcal{G}(\Gamma))$ is an~HNN-extension of~$\pi_{1}(\mathcal{G}(\mathcal{T}))$, the~subgroup~$N$ splits, by~Proposition~\ref{p1106}, as~the~free product of~a~free group~$F$ and~groups~$X_{i}$ ($i \in \mathcal{I}$) isomorphic to~$U$. Let $\theta\colon N \to U$ be the~surjective homomorphism extending the~isomorphisms $X_{i} \to U$ ($i \in \mathcal{I}$) and~taking~$F$ to~$1$. Let also $M = \operatorname{ker}\theta\delta$, where $\delta\colon U \to U/V$ is a~natural homomorphism. Then $M \in \mathcal{C}^{*}(N)$ and~$M \cap X_{i} \cong V \in \mathcal{P}(\Omega)$ for~all $i \in \mathcal{I}$. By~applying Proposition~\ref{p1107} to~the~free product~$N$, we~get
$$
M \in \mathcal{P}\big(\{M \cap X_{i} \mid i \in \mathcal{I}\}\big).
$$
Hence, $M \in \mathcal{P}(\Omega)$.

Since $M \leqslant N \leqslant \pi_{1}(\mathcal{G}(\Gamma))$ is a~subnormal sequence whose factors belong to~$\mathcal{C}$, it~follows from~Proposition~\ref{p1401} that $M$ contains a~subgroup $L\kern-1pt{} \in\kern-1pt{} \mathcal{C}^{*}(\pi_{1}(\mathcal{G}(\Gamma)))$.\kern-3pt{} Because~the~class~$\mathcal{P}(\Omega)$ is closed under~taking subgroups, we have $L \in \mathcal{P}(\Omega)$. Therefore, $\pi_{1}(\mathcal{G}(\Gamma))$ is an~extension of~the~$\mathcal{P}(\Omega)$\nobreakdash-group~$L$ by~the~$\mathcal{C}$\nobreakdash-group~$\pi_{1}(\mathcal{G}(\Gamma))/L$.
\end{proof}

\begin{proof}[\textbf{\textup{Proof of~Theorem\kern-1pt{}~\ref{t2201}.}}]
\mbox{}\kern-1pt{}Proposition~\ref{p1405} implies that if $\mathcal{C}$ is a~root class of~groups and~a~family~$\Omega$ consists of~residually $\mathcal{C}$\nobreakdash-groups, then any extension of~a~$\mathcal{P}(\Omega)$\nobreakdash-group by~a~$\mathcal{C}$\nobreakdash-group is residually a~$\mathcal{C}$\nobreakdash-group. Therefore, Theorem~\ref{t2201} immediately follows from~Proposition~\ref{p2203}.
\end{proof}

\section{Proof of~Theorem~\ref{t2601}}

\begin{proposition}\label{p2602}
Suppose that $p$ is a~prime number and~$n = p^{l}$ for~some $l \geqslant 1$. Suppose also that $\lambda$ and~$\mu$ are the~bijections of~the~set $M = \{0,\, 1,\, \ldots,\, n-1\}$ defined as~follows\textup{:}{\parfillskip=0pt{}\par}

\vspace*{-12pt}

\begin{align*}
\lambda(i) &= (i+1) \bmod n, \\
\mu(i) &= \begin{cases}
i+1, & i \not\equiv p-1 \pmod p,\\
i-(p-1), & i \equiv p-1 \pmod p.
\end{cases}
\end{align*}

\vspace*{4pt}

\noindent
If~$X$ denotes the~subgroup of~the~group of~bijective mappings of~$M$ generated by~the~elements~$\lambda$ and~$\mu$\textup{,} then $X^{n} = 1$ and\textup{,}~therefore\textup{,} $X$ is a~finite $p$\nobreakdash-group.
\end{proposition}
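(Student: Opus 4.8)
The plan is to realize $X$ as a subgroup of the $l$-fold iterated wreath product $W_{l} = C_{p} \wr C_{p} \wr \dots \wr C_{p}$, a Sylow $p$-subgroup of $\operatorname{Sym}(M)$, exploiting that $W_{l}$ is manifestly a finite $p$-group in which every element has order dividing $p^{l} = n$. Since $X$ is a subgroup of the finite group $\operatorname{Sym}(M)$, this yields at once that $X$ is a finite $p$-group and that $X^{n} = 1$. I would argue by induction on $l$, proving the strengthened statement that there is a subgroup $W_{l} \leqslant \operatorname{Sym}(M)$, isomorphic to the $l$-fold iterated wreath product of $C_{p}$, with $\langle\lambda,\mu\rangle \leqslant W_{l}$ and $\operatorname{ord}(w) \mid p^{l}$ for every $w \in W_{l}$. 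The case $l = 1$ is immediate: then $n = p$, the maps $\lambda$ and $\mu$ both coincide with the $p$-cycle $i \mapsto (i+1)\bmod p$, and $X$ is cyclic of order $p$.

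For the inductive step, partition $M$ into the residue classes $B_{r} = \{\, i \in M : i \equiv r \pmod p \,\}$ ($r = 0, \dots, p-1$), each of size $p^{l-1}$, and fix the bijections $\beta_{r} \colon B_{r} \to M' = \{0, \dots, p^{l-1}-1\}$, $\beta_{r}(i) = \lfloor i/p \rfloor$. A direct check shows that both $\lambda$ and $\mu$ carry $B_{r}$ onto $B_{(r+1)\bmod p}$, so that, viewed inside the imprimitive wreath product $\operatorname{Sym}(M') \wr S_{p} \hookrightarrow \operatorname{Sym}(M)$ attached to this block system (with the $\beta_{r}$ as identifications), they take the form $(\rho;\, f_{0}, \dots, f_{p-1})$ with block part $\rho = (0\,1\,\dots\,p-1)$. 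Computing the coordinate maps $f_{r} \in \operatorname{Sym}(M')$ from the definitions, one obtains $\mu = (\rho;\, 1, \dots, 1)$ and $\lambda = (\rho;\, 1, \dots, 1, \lambda')$, where the single nontrivial entry of $\lambda$ occupies the coordinate $r = p-1$ and $\lambda' \colon j \mapsto (j+1)\bmod p^{l-1}$ is precisely the map ``$\lambda$'' of the proposition for $n' = p^{l-1}$. By the induction hypothesis $\lambda'$ lies in a copy $W_{l-1} \leqslant \operatorname{Sym}(M')$ of the $(l-1)$-fold iterated wreath product of $C_{p}$; hence the subgroup $W_{l} := W_{l-1} \wr \langle\rho\rangle \leqslant \operatorname{Sym}(M') \wr S_{p}$, an $l$-fold iterated wreath product of copies of $C_{p}$, contains both $\mu$ and $\lambda$, since $1, \lambda' \in W_{l-1}$ and $\rho \in \langle\rho\rangle$.

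To finish I would also record the elementary fact that $\operatorname{ord}(w) \mid p \cdot \exp(G)$ for every $w$ in $G \wr C_{p}$, where $G$ is a finite group and $C_{p}$ acts by cyclic shift of the $p$ coordinates: writing $w = (c^{a};\, g_{0}, \dots, g_{p-1})$, one has $w^{\exp(G)} = 1$ if $a \equiv 0 \pmod p$, while if $a \not\equiv 0$ then $w^{p}$ has all its coordinates equal to $G$-conjugates of one element $h$ which is a product of all the $g_{i}$ in some order, so $\operatorname{ord}(h) \mid \exp(G)$ and $w^{p\exp(G)} = 1$. Applying this with $G = W_{l-1}$ (whose exponent divides $p^{l-1}$ by induction) gives that every element of $W_{l}$ has order dividing $p \cdot p^{l-1} = n$; and $W_{l}$, being an iterated wreath product of $C_{p}$'s, is a finite $p$-group. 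Consequently $X \leqslant W_{l}$ is a finite $p$-group with $X^{n} = 1$.

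The only step demanding genuine care is the wreath-product decomposition of $\lambda$ and $\mu$ in the inductive step: one must check that the ``carry'' of the odometer $\lambda$ out of the block $B_{p-1}$ into $B_{0}$, together with the wrap-around $p^{l}-1 \mapsto 0$, reproduces exactly the map $\lambda'$ in the single coordinate $r = p-1$, and that $\mu$ contributes nothing inside any block. Once this bookkeeping is done — and the wreath-product convention is fixed so that $W_{l}$ really is the iterated wreath product of $C_{p}$'s, of exponent a power of $p$ — the remaining arguments are routine.
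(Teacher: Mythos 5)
Your argument is correct, and it takes a genuinely different route from the paper's. The paper proceeds by a direct computation with exponent sums: starting from the pointwise identities expressing $\lambda^{\pm 1}$ through $\mu^{\pm 1}$ and $\lambda^{\pm p}$, and using the relations $\mu^{p} = [\lambda^{p}, \mu] = 1$, it shows that for every $x \in X$ and $i \in M$ one has $(x^{p})(i) = (i + pk_{i}) \bmod n$ for some integer~$k_{i}$, whence $(x^{p})^{n/p}$ fixes every point and $X^{n} = 1$. You instead exhibit the block system of residue classes modulo~$p$, identify $\mu$ as the pure block shift $(\rho;\,1,\dots,1)$ and $\lambda$ as the odometer $(\rho;\,1,\dots,1,\lambda')$, and conclude by induction that $X$ embeds in the $l$\nobreakdash-fold iterated wreath product $W_{l} = C_{p} \wr \dots \wr C_{p}$, i.\,e.\ in a Sylow $p$\nobreakdash-subgroup of $\operatorname{Sym}(M)$, whose exponent divides $p^{l}$ by the standard estimate $\exp(G \wr C_{p}) \mid p\exp(G)$. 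I checked your coordinate bookkeeping (including the carry of $\lambda$ out of the block $B_{p-1}$ and the wrap-around $n-1 \mapsto 0$) and it is right; the induction hypothesis does supply $\lambda' \in W_{l-1}$ and $\exp(W_{l-1}) \mid p^{l-1}$ as you need. The two proofs cost comparable amounts of verification, but yours buys a sharper structural conclusion --- an explicit containment of $X$ in the iterated wreath product --- and replaces the paper's ad hoc reduction $y_{i} = \lambda^{pk_{i}}$ by a standard wreath-product fact, while the paper's version stays entirely self-contained and avoids the wreath-product formalism. One small remark: once $X$ is known to be a $p$\nobreakdash-group, the equality $X^{n} = 1$ is automatic, since any $p$\nobreakdash-element of $\operatorname{Sym}(p^{l})$ is a product of disjoint cycles of $p$\nobreakdash-power length at most~$p^{l}$; so your exponent lemma for $G \wr C_{p}$, though correct, could be bypassed.
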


\begin{proof}
Let $x$ be an~element of~$X$ written as~a~product of~the~generators~$\lambda$ and~$\mu$. Let also $\sigma_{\lambda}(x)$ and~$\sigma_{\mu}(x)$ denote the~sums of~exponents of~$\lambda$ and~$\mu$ in~this product. It~is easy to~see that, for~any $i \in M$,
\vspace*{4pt}
\begin{align*}
\lambda(i) &= \begin{cases}
\mu(i), & i \not\equiv p-1 \pmod p,\\
(\mu\lambda^{p})(i), & i \equiv p-1 \pmod p,
\end{cases}
\\
\lambda^{-1}(i) &= \begin{cases}
\mu^{-1}(i), & \hspace*{2.5pt}i \not\equiv 0 \pmod p,\\
(\lambda^{-p}\mu^{-1})(i), & \hspace*{2.5pt}i \equiv 0 \pmod p.
\end{cases}
\end{align*}

It follows that, for~each $i \in M$, there exist an~element $y_{i} \in \operatorname{sgp}\{\lambda^{p}, \mu\}$ and~a~number $k_{i} \in \mathbb{Z}$ such that
$$
(x^{p})(i) = y_{i}(i),\quad 
\sigma_{\mu}(y_{i}) = p\big(\sigma_{\lambda}(x) + \sigma_{\mu}(x)\big),\quad 
\sigma_{\lambda}(y_{i}) = pk_{i}
$$
(here, as~above, $\sigma_{\lambda}(y_{i})$ and~$\sigma_{\mu}(y_{i})$ denote the~sums of~exponents of~$\lambda$ and~$\mu$ in~the~fixed representation of~$y_{i}$ as~a~product of~generators). Since $\mu^{p} = [\lambda^{p}, \mu] = 1$, the~equality $y_{i} = \lambda^{pk_{i}}$
holds. Therefore,
$$
(x^{p})(i) = (\lambda^{pk_{i}})(i) = (i+pk_{i}) \bmod n.
$$

If $j = (i+pr) \bmod n$ for~some $r \in \mathbb{Z}$, then
$$
(x^{p})(j) = (\lambda^{pr}x^{p})(i) = (x^{p}\lambda^{pr})(i) = \big(i+p(k_{i}+r)\big) \bmod n.
$$
Using these relations and~obvious induction, we get that, for~any $s \geqslant 1$, the~equality
$$
(x^{p})^{s}(i) = (i+psk_{i}) \bmod n
$$
holds. Therefore, $(x^{p})^{n/p}(i) = i$, and~since $x$ and~$i$ are chosen arbitrarily, $X^{n} = 1$.
\end{proof}

\begin{proposition}\label{p2603}
Suppose that $p$\textup{,} $n$\textup{,} $M$\textup{,} $\lambda$\textup{,} and~$\mu$ are defined in~the~same way as~in~Proposition~\textup{\ref{p2602}}. Suppose also that $C_{0}$\textup{,}~\ldots\textup{,}~$C_{n-1}$ are cyclic groups of~order~$p$ with~generators $c_{0}$\textup{,}~\ldots\textup{,}~$c_{n-1}$ respectively\textup{;} $H_{n}$~is the~direct product of~the~groups~$C_{i}$ \textup{(}$i \in M$\textup{);} $\alpha_{n}$~and~$\beta_{n}$ are the~automorphisms of~$H_{n}$ acting according to~the~rule\textup{:}
$$
c_{i}\alpha_{n} = c_{\mu(i)},\quad 
c_{i}\beta_{n} = c_{(\lambda^{-1}\mu\lambda)(i)}\quad 
(i \in M);
$$
$A_{n}$~and~$B_{n}$ are the~split extensions of~$H_{n}$ by~the~cyclic groups~$\langle \alpha_{n} \rangle$ and~$\langle \beta_{n} \rangle$\textup{,} respectively. Then the~generalized free product~$P_{n}$ of~the~groups~$A_{n}$ and~$B_{n}$ with~the~amalgamated subgroup~$H_{n}$ is residually $p$\nobreakdash-finite.
\end{proposition}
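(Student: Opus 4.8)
The plan is to produce a homomorphism of $P_n$ onto a finite $p$\nobreakdash-group that is injective on the two vertex groups $A_n$ and $B_n$, and then quote Theorem~\ref{t2201}. Indeed, the class of all finite $p$\nobreakdash-groups is a root class, the vertex groups $A_n$ and $B_n$ are themselves finite $p$\nobreakdash-groups (hence trivially residually $p$\nobreakdash-finite), and a homomorphism injective on the vertex groups is automatically injective on the edge subgroups $H_n \leqslant A_n$ and $H_n \leqslant B_n$; so such a homomorphism of type~$(\mathfrak{i})$ is exactly what Theorem~\ref{t2201} requires.

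The construction rests on Proposition~\ref{p2602}. The subgroup $G = \langle \mu,\ \lambda^{-1}\mu\lambda \rangle$ of the group of bijections of $M$ is contained in $X = \langle \lambda, \mu \rangle$, which is a finite $p$\nobreakdash-group by that proposition; hence $G$ is a finite $p$\nobreakdash-group. I would let $G$ act on $H_n = \prod_{i \in M} C_i$ by permuting the direct factors according to its action on $M$ (so $g$ sends $c_i$ to $c_{g(i)}$) and form the split extension $W = H_n \rtimes G$. Being an extension of a finite $p$\nobreakdash-group by a finite $p$\nobreakdash-group, $W$ is a finite $p$\nobreakdash-group.

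Write $\bar\mu$ and $\bar\nu$ for the elements $\mu$ and $\lambda^{-1}\mu\lambda$ of $G$. Then $H_n\langle \bar\mu \rangle = H_n \rtimes \langle \bar\mu \rangle \leqslant W$ is isomorphic to $A_n$ via the map that is the identity on $H_n$ and sends $\alpha_n$ to $\bar\mu$: both are split extensions of $H_n$ by a cyclic group of the same order, realizing the same automorphism of $H_n$ (the permutation of the $c_i$ induced by $\mu$). In the same way $H_n \rtimes \langle \bar\nu \rangle \cong B_n$ via $\beta_n \mapsto \bar\nu$. The resulting embeddings $A_n \hookrightarrow W$ and $B_n \hookrightarrow W$ restrict to the same (inclusion) map on $H_n$, so by the universal property of the generalized free product they extend to a homomorphism $\psi\colon P_n \to W$. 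Since $A_n$ and $B_n$ embed into $P_n$ and $\psi$ restricts to an injection on each of them, $\psi$ is injective on both vertex groups; passing to $\psi(P_n) \leqslant W$ (still a finite $p$\nobreakdash-group) we obtain the homomorphism of type~$(\mathfrak{i})$ required above, and Theorem~\ref{t2201} yields that $P_n$ is residually $p$\nobreakdash-finite.

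The only substantial ingredient is Proposition~\ref{p2602}; granting it, the rest is bookkeeping. The one place to be attentive is the verification that conjugation by $\bar\mu$ (respectively $\bar\nu$) inside $W$ acts on $H_n$ exactly as $\alpha_n$ (respectively $\beta_n$) — so that the two maps into $W$ are well-defined homomorphisms and are injective — but this is immediate from the definitions of $\alpha_n$, $\beta_n$ and of the action of $G$ on $H_n$.
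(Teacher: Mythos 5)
Your argument is correct, and it shares with the paper the essential reduction to Proposition~\ref{p2602}: everything comes down to the fact that the permutation group $\langle \mu,\ \lambda^{-1}\mu\lambda\rangle$ is a finite $p$\nobreakdash-group. Where you diverge is in how that fact is converted into residual $p$\nobreakdash-finiteness of $P_{n}$. The paper observes that this permutation group is exactly $\operatorname{Aut}_{P_{n}}(H_{n})$, the group of automorphisms of the normal amalgamated subgroup $H_{n}$ induced by conjugation in $P_{n}$, and then invokes Higman's criterion \cite[Corollary~2]{Higman1964JA} for amalgams of finite $p$\nobreakdash-groups as a black box. You instead build the witness group explicitly --- the semidirect product $W = H_{n} \rtimes \langle \mu,\ \lambda^{-1}\mu\lambda\rangle$ --- check that $A_{n}$ and $B_{n}$ embed in it compatibly over $H_{n}$, and feed the resulting homomorphism $P_{n} \to W$ into Theorem~\ref{t2201} (in fact you produce a homomorphism of type~$(\mathfrak{i})$, so the weaker statement \cite[Proposition~7]{Sokolov2021SMJ1} already suffices and the full strength of Theorem~\ref{t2201} is not needed). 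Your route is more self-contained: it reproves, in this special case where the amalgamated subgroup is normal in both factors, precisely the content of Higman's corollary, at the cost of the routine verifications that $H_{n}\rtimes\langle\bar\mu\rangle\cong A_{n}$ and $H_{n}\rtimes\langle\bar\nu\rangle\cong B_{n}$ and that the two embeddings agree on $H_{n}$. Both of these are sound, so the proof goes through; the paper's version is shorter only because it imports the external result.
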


\begin{proof}
Since $H_{n}$ is normal in~$A_{n}$ and~$B_{n}$, we can consider the~group~$\operatorname{Aut}_{P_{n}}(H_{n})$ consisting of~the~restrictions on~$H_{n}$ of~all inner automorphisms of~$P_{n}$. Obviously, $\operatorname{Aut}_{P_{n}}(H_{n})$ is~generated by~$\alpha_{n}$ and~$\beta_{n}$ and~is isomorphic to~the~subgroup of~the~group~$X$ from~Proposition~\ref{p2602} generated by~the~bijections~$\mu$ and~$\lambda^{-1}\mu\lambda$. It~follows that $\operatorname{Aut}_{P_{n}}(H_{n})$ is a~finite $p$\nobreakdash-group. Since $A_{n}$ and~$B_{n}$ are also finite $p$\nobreakdash-groups, $P_{n}$~is residually $p$\nobreakdash-finite by~\cite[Corollary~2]{Higman1964JA}.
\end{proof}

\begin{proposition}\label{p2604}
Suppose that $p$ is a~prime number\textup{,} $\lambda_{\infty}$~and~$\mu_{\infty}$ are the~bijections of~$\mathbb{Z}$ defined as~follows\textup{:}
\begin{align*}
\lambda_{\infty}(i) &= i+1,\\
\mu_{\infty}(i) &= \begin{cases}
i+1, & i \not\equiv p-1 \pmod p,\\
i-(p-1), & i \equiv p-1 \pmod p.
\end{cases}
\end{align*}
Suppose also that $C_{i}$ is a~cyclic group of~order~$p$ with~a~generator~$c_{i}$ for~any $i \in \mathbb{Z}$\textup{;} $H_{\infty}$~is the~direct product of~the~groups~$C_{i}$ \textup{(}$i \in \mathbb{Z}$\textup{);} $\alpha_{\infty}$~and~$\beta_{\infty}$ are the~automorphisms of~$H_{\infty}$ acting according to~the~rule\textup{:}
$$
c_{i}^{\vphantom{1}}\alpha_{\infty}^{\vphantom{1}} = c_{\mu_{\infty}(i)}^{\vphantom{1}},\quad 
c_{i}^{\vphantom{1}}\beta_{\infty}^{\vphantom{1}} = c_{(\lambda_{\infty}^{-1}\mu_{\infty}\lambda_{\infty})(i)}\quad 
(i \in \mathbb{Z});
$$
$A_{\infty}$~and~$B_{\infty}$ are the~split extensions of~$H_{\infty}$ by~the~cyclic groups~$\langle \alpha_{\infty} \rangle$ and~$\langle \beta_{\infty} \rangle$\textup{,} respectively. Then the~following statements hold.

\textup{1.}\hspace{1ex}The~generalized free product~$P_{\infty}$ of~the~groups~$A_{\infty}$ and~$B_{\infty}$ with~the~amalgamated subgroup~$H_{\infty}$ is residually $p$\nobreakdash-finite.

\textup{2.}\hspace{1ex}The~automorphism~$\alpha_{\infty}^{-1}\beta_{\infty}^{\vphantom{1}}$ has an~infinite order.
\end{proposition}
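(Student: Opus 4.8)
The plan is to establish the two assertions independently. For assertion~1 I would approximate $P_{\infty}$ by the family $\{P_{p^{l}} : l \geqslant 1\}$ of Proposition~\ref{p2603}. Given $l \geqslant 1$, put $n = p^{l}$: reduction modulo~$n$ carries each block $\{kp, \ldots, kp+p-1\}$ of~$\mathbb{Z}$ onto a block of the set~$M$, so $\mu_{\infty}$ and~$\lambda_{\infty}$ are compatible with it, and hence $c_{i} \mapsto c_{i \bmod n}$ defines a surjective homomorphism $\rho_{l}\colon H_{\infty} \to H_{n}$ that intertwines $\alpha_{\infty}$ with~$\alpha_{n}$ and~$\beta_{\infty}$ with~$\beta_{n}$. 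This $\rho_{l}$ extends to surjections $A_{\infty} \to A_{n}$ and~$B_{\infty} \to B_{n}$ coinciding on~$H_{\infty}$, and therefore induces a surjective homomorphism $\pi_{l}\colon P_{\infty} \to P_{n}$. Since every $P_{n}$ is residually $p$\nobreakdash-finite by Proposition~\ref{p2603}, it will suffice to verify that $\bigcap_{l \geqslant 1}\operatorname{ker}\pi_{l} = 1$.

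To this end I would take $1 \ne g \in P_{\infty}$ and use the normal form theorem for the amalgamated free product $P_{\infty} = A_{\infty} *_{H_{\infty}} B_{\infty}$ to write $g = h\,x_{1}x_{2}\cdots x_{m}$, where $h \in H_{\infty}$, each~$x_{i}$ lies in $(A_{\infty} \setminus H_{\infty}) \cup (B_{\infty} \setminus H_{\infty})$, and consecutive $x_{i}$'s lie in different factors. If $m = 0$, then $g \in H_{\infty}$ involves only finitely many of the~$c_{i}$, and choosing~$l$ so large that reduction modulo~$p^{l}$ is injective on the corresponding (finite) set of indices gives $\pi_{l}(g) = \rho_{l}(g) \ne 1$. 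If $m \geqslant 1$, the key observation is that $A_{\infty} = H_{\infty} \rtimes \langle\alpha_{\infty}\rangle$ and $A_{n} = H_{n} \rtimes \langle\alpha_{n}\rangle$ with $\langle\alpha_{\infty}\rangle$ and $\langle\alpha_{n}\rangle$ both cyclic of order~$p$ (since $\mu_{\infty}$ and~$\mu$ have order~$p$), so the map $A_{\infty} \to A_{n}$ sends $A_{\infty} \setminus H_{\infty}$ into $A_{n} \setminus H_{n}$; the analogous statements hold for the~$B$'s, because $\beta_{\infty}$ and~$\beta_{n}$ are conjugate to~$\alpha_{\infty}$ and~$\alpha_{n}$ via the relevant shifts and hence are again of order~$p$. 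Consequently $\pi_{l}(x_{1})\cdots\pi_{l}(x_{m})$ is a reduced word of length~$m \geqslant 1$ in $P_{n} = A_{n} *_{H_{n}} B_{n}$, so $\pi_{l}(g) \notin H_{n}$ and in particular $\pi_{l}(g) \ne 1$ — in fact for every~$l$. This yields $\bigcap_{l}\operatorname{ker}\pi_{l} = 1$, whence $P_{\infty}$ is residually $p$\nobreakdash-finite. (Note that one cannot simply cite~\cite{Higman1964JA} as in Proposition~\ref{p2603}, since by assertion~2 the group $\operatorname{Aut}_{P_{\infty}}(H_{\infty})$ is not a $p$\nobreakdash-group.)

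For assertion~2 I would observe that $\alpha_{\infty}^{-1}\beta_{\infty}$ permutes the generators~$c_{i}$: writing $c_{i}(\alpha_{\infty}^{-1}\beta_{\infty}) = c_{\pi(i)}$, one has $\pi = (\lambda_{\infty}^{-1}\mu_{\infty}\lambda_{\infty}) \circ \mu_{\infty}^{-1}$ as a map of~$\mathbb{Z}$, and a short direct computation (for $i \equiv 0 \pmod p$ one has $\mu_{\infty}^{-1}(i) = i+p-1 \equiv p-1$, and then $(\lambda_{\infty}^{-1}\mu_{\infty}\lambda_{\infty})(i+p-1) = i+p$) shows $\pi(i) = i + p$ whenever $i \equiv 0 \pmod p$. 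Hence the $\pi$\nobreakdash-orbit of~$0$ contains $\{0, p, 2p, \ldots\}$ and is infinite, so $\pi$ has infinite order in the symmetric group of~$\mathbb{Z}$; since $(\alpha_{\infty}^{-1}\beta_{\infty})^{s}$ acts on each~$c_{i}$ by $c_{i} \mapsto c_{\pi^{s}(i)}$, the automorphism $\alpha_{\infty}^{-1}\beta_{\infty}$ has infinite order as well.

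The step I expect to be the main obstacle is the normal-form argument in assertion~1 — specifically, guaranteeing that $\pi_{l}$ carries every syllable lying outside the amalgamated subgroup~$H_{\infty}$ to a syllable lying outside~$H_{n}$; this is exactly where the split structure of the four groups $A_{\infty}, B_{\infty}, A_{n}, B_{n}$ and the fact that the complementary cyclic subgroups all have order precisely~$p$ are used. Everything else is routine bookkeeping: verifying that $\rho_{l}$ is a well-defined homomorphism compatible with the automorphisms, and carrying out the permutation computation in assertion~2.
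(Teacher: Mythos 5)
Your proof is correct. For Statement~2 and for elements of~$H_{\infty}$ your argument coincides with the paper's: the paper likewise maps $P_{\infty}$ onto~$P_{n}$ via $\alpha_{\infty} \mapsto \alpha_{n}$, $\beta_{\infty} \mapsto \beta_{n}$, $c_{i} \mapsto c_{i \bmod n}$, choosing the $p$\nobreakdash-number~$n$ large enough that reduction modulo~$n$ is injective on the finitely many coordinates supporting the given element of~$H_{\infty}$, and then invokes Proposition~\ref{p2603}; and it proves Statement~2 by the same computation, namely $[\mu_{\infty}, \lambda_{\infty}](i) = i+p$ for $i \equiv 0 \pmod p$. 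The only genuine divergence is the treatment of elements outside~$H_{\infty}$: the paper passes to the quotient $P_{\infty}/H_{\infty}$, which is the ordinary free product of the two cyclic groups $\langle \alpha_{\infty} \rangle$ and~$\langle \beta_{\infty} \rangle$ of order~$p$, and applies Proposition~\ref{p1405}, whereas you show that $\pi_{l}$ preserves syllable length, using that all four complements $\langle \alpha_{\infty} \rangle$, $\langle \beta_{\infty} \rangle$, $\langle \alpha_{n} \rangle$, $\langle \beta_{n} \rangle$ have order exactly~$p$, so that a reduced word of positive length in $A_{\infty} *_{H_{\infty}} B_{\infty}$ maps to a reduced word of positive length in $A_{n} *_{H_{n}} B_{n}$. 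Both routes are sound; the paper's is slightly shorter since it needs no normal-form argument, while yours treats the two cases uniformly and yields the stronger conclusion $\bigcap_{l} \operatorname{ker}\pi_{l} = 1$. Your parenthetical remark that Higman's criterion~\cite{Higman1964JA} cannot be applied directly to~$P_{\infty}$ --- because $\operatorname{Aut}_{P_{\infty}}(H_{\infty})$ contains the infinite-order element induced by $\alpha_{\infty}^{-1}\beta_{\infty}^{\vphantom{1}}$ --- correctly identifies why the finite approximants~$P_{n}$ are needed at all.
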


\begin{proof}
1.\hspace{1ex}Let us take an~element $x \in P_{\infty} \setminus \{1\}$ and~find a~homomorphism of~$P_{\infty}$ onto~a~finite $p$\nobreakdash-group that maps~$x$ to~a~non-trivial element.

The~subgroup~$H_{\infty}$ is normal in~$P_{\infty}$, and~the~quotient group~$P_{\infty}/H_{\infty}$ splits as~the~(ordinary) free product of~two finite $p$\nobreakdash-groups~$\langle \alpha_{\infty} \rangle$ and~$\langle \beta_{\infty} \rangle$. Therefore, if $x \notin H_{\infty}$, then the~natural homomorphism $P_{\infty} \to P_{\infty}/H_{\infty}$ can be extended to~the~desired one by~Proposition~\ref{p1405}.

If $x \in H_{\infty}$, then there exists a~$p$\nobreakdash-number~$n$ such that
$$
x\kern3pt{} \in\kern-7pt{} \prod_{-n/2 \leqslant i < n/2\phantom{-}}\kern-4pt{} C_{i}.
$$
Let us consider the~mapping of~the~generators of~$P_{\infty}$ into~the~group~$P_{n}$ from~Proposition~\ref{p2603} acting according to~the~rule
$$
\alpha_{\infty} \mapsto \alpha_{n},\quad 
\beta_{\infty} \mapsto \beta_{n},\quad 
c_{i} \mapsto c_{i \bmod n}.
$$
Since $p \mid n$, this mapping defines a~homomorphism, which we denote by~$\sigma$. It~is clear that $x \notin \operatorname{ker}\sigma$ due~to~the~choice of~$n$. Therefore, $\sigma$ can be extended to~the~desired mapping.{\parfillskip=0pt{}\par}

2.\hspace{1ex}It can be directly verified that if $i \in \mathbb{Z}$ and~$i \equiv 0 \pmod p$, then $[\mu_{\infty}, \lambda_{\infty}](i) = i+p$. Hence, $c_{i}^{\vphantom{1}}(\alpha_{\infty}^{-1}\beta_{\infty}^{\vphantom{1}}) = c_{i+p}^{\vphantom{1}}$ and,~therefore, the~order of~$\alpha_{\infty}^{-1}\beta_{\infty}^{\vphantom{1}}$ is infinite.
\end{proof}

\begin{proof}[\textup{\textbf{Proof of~Theorem~\ref{t2601}.}}]
Since $\mathcal{C}$ includes at~least one infinite periodic group and~is closed under~taking subgroups, extensions, and~Cartesian powers, there exists a~prime number~$p$ (assumed to~be fixed below) such that $\mathcal{C}$ contains a~cyclic group of~order~$p$, the~Cartesian product of~an~infinite number of~such groups, and~the~class of~all finite $p$\nobreakdash-groups. Hence, the~groups~$H_{\infty}$, $A_{\infty}$, and~$B_{\infty}$ from~Proposition~\ref{p2604} belong to~$\mathcal{C}$. Let us define a~graph of~groups~$\mathcal{G}(\Gamma)$ as~follows.

If $\Gamma$ has only one vertex, then we assign to~it the~group~$H_{\infty}$, while to~each edge $e \in \mathcal{E}$ the~same group~$H_{\infty}$ and~the~homomorphisms $\varphi_{+e}^{\vphantom{1}} = \mathrm{id}_{H_{\infty}}^{\vphantom{1}}$, $\varphi_{-e}^{\vphantom{1}} = \alpha_{\infty}^{-1}\beta_{\infty}^{\vphantom{1}}$, where $\alpha_{\infty}^{\vphantom{1}}$ and~$\beta_{\infty}^{\vphantom{1}}$ are the~automorphisms from~Proposition~\ref{p2604}. Otherwise, we choose some edge~$f$ of~$\Gamma$ that is not a~loop and~associate the~vertex~$f(1)$ with~the~group~$A_{\infty}$, all other vertices with~the~group~$B_{\infty}$, and~all the~edges with~the~group~$H_{\infty}$ and~its identity embeddings in~$A_{\infty}$ and~$B_{\infty}$.

It is easy to~see that, in~both cases and~for~any choice of~the~maximal tree~$\mathcal{T}$ in~$\Gamma$, all edge subgroups of~the~group $\pi_{1}(\mathcal{G}(\Gamma)) = \pi_{1}(\mathcal{G}(\Gamma), \mathcal{T})$ coincide, are normal in~$\pi_{1}(\mathcal{G}(\Gamma))$, and~are equal to~$H_{\infty}$ (we again denote this unique subgroup by~$H_{\infty}$). To~prove Statement~3, let us fix an~element $g \in \pi_{1}(\mathcal{G}(\Gamma)) \setminus \{1\}$ and~find a~homomorphism of~$\pi_{1}(\mathcal{G}(\Gamma))$ onto~a~$\mathcal{C}$\nobreakdash-group taking~$g$ to~a~non-trivial element.

The~quotient group $\pi_{1}(\mathcal{G}(\Gamma))/H_{\infty}$ is either a~free group whose basis is the~stable letters of~$\pi_{1}(\mathcal{G}(\Gamma))$, or~the~(ordinary) free product of~this group and~finite $p$\nobreakdash-groups, which belong to~$\mathcal{C}$ and~are isomorphic to~$A_{\infty}/H_{\infty}$ or,~what is the~same,~$B_{\infty}/H_{\infty}$. Hence, it is residually a~$\mathcal{C}$\nobreakdash-group by~Proposition~\ref{p1405}, and~if $g \notin H_{\infty}$, then the~natural homomorphism\linebreak $\pi_{1}(\mathcal{G}(\Gamma)) \to \pi_{1}(\mathcal{G}(\Gamma))/H_{\infty}$ can be extended to~the~desired~one.

Let $g \in H_{\infty}$. Consider the~mapping of~the~generators of~$\pi_{1}(\mathcal{G}(\Gamma))$ into~the~group~$P_{\infty}$ from~Proposition~\ref{p2604} that acts on~the~generators of~all vertex groups as~the~identity mapping and~takes the~symbols~$t_{e}$ either to~the~element~$\alpha_{\infty}^{-1}\beta_{\infty}^{\vphantom{1}}$ (if~$\Gamma$ has one vertex), or~to~$1$ (otherwise). It~is easy to~see that this map defines a~homomorphism, which is injective on~$H_{\infty}$. The~group~$P_{\infty}$ is residually $p$\nobreakdash-finite by~Proposition~\ref{p2604}, and~the~class~$\mathcal{C}$ contains all finite $p$\nobreakdash-groups. Hence, the~constructed homomorphism can again be extended to~the~desired one.

Let us now turn to~the~proof of~Statement~4 and~show that there exists an~element $\gamma \in \pi_{1}(\mathcal{G}(\Gamma))$ such that the~conjugation by~$\gamma$ acts on~$H_{\infty}$ as~the~automorphism~$\alpha_{\infty}^{-1}\beta_{\infty}^{\vphantom{1}}$.{\parfillskip=0pt{}\par}

Indeed, if $\Gamma$ has one vertex, then for~some $e \in \mathcal{E}$, the~element~$t_{e}$ can be taken as~$\gamma$. Otherwise, $\gamma = \alpha_{\infty}^{-1}\beta_{\infty}^{\vphantom{1}}$ or~$\gamma = (t_{f}^{-1}\alpha_{\infty}^{\vphantom{1}}t_{f}^{\vphantom{1}})^{-1}_{\vphantom{f}}\beta_{\infty}^{\vphantom{1}}$; it~depends on~whether or~not $\mathcal{T}$ contains the~edge~$f$ chosen above (here $\alpha_{\infty}$ and~$\beta_{\infty}$ are the~elements of~the~groups~$A_{\infty}$ and~$B_{\infty}$ associated with~the~vertices~$f(1)$ and~$f(-1)$).

If $\sigma$ is a~homomorphism of~$\pi_{1}(\mathcal{G}(\Gamma))$ onto~a~periodic group, then $(\gamma\sigma)^{k} = 1$ for~some $k \geqslant 1$. Since, by~Proposition~\ref{p2604}, the~automorphism~$\alpha_{\infty}^{-1}\beta_{\infty}^{\vphantom{1}}$ has an~infinite order, there exists an~element $h \in H_{\infty}$ satisfying the~relations
$$
h \ne h(\alpha_{\infty}^{-1}\beta_{\infty}^{\vphantom{1}})^{k} = \gamma^{-k}h\gamma^{k}.
$$
Therefore,
$$
1 \ne [h, \gamma^{k}] \in \operatorname{ker}\sigma \cap H_{\infty} = \operatorname{ker}\sigma \cap H_{\pm e}
$$
for all $e \in \mathcal{E}$.
\end{proof}

\newpage

\end{document}